\documentclass{amsart}
\usepackage{graphicx,amsthm,amssymb,amsmath,mathtools}
\usepackage{hyperref}
\usepackage{color}

\theoremstyle{plain}
\newtheorem{theorem}{Theorem}[section]
\newtheorem{proposition}[theorem]{Proposition}

\theoremstyle{definition}
\newtheorem{definition}[theorem]{Definition}
\newtheorem{example}[theorem]{Example}

\theoremstyle{remark}
\newtheorem{remark}[theorem]{Remark}

\newcommand{\highlight}[1]{\emph{\textcolor{blue}{#1}}}

\numberwithin{equation}{section}

\newcommand{\bZ}{\mathbb{Z}}

\newcommand{\Spots}{\mathcal{P}}
\newcommand{\ls}{\mathrm{ls}}
\newcommand{\Tspots}{\widehat{\mathcal{P}}}
\newcommand{\val}{\mathrm{val}}

\newcommand{\Park}{\mathrm{Park}}
\newcommand{\fin}{\mathrm{Fin}}
\newcommand{\dir}{\mathrm{Dir}}

\title{Bilateral parking procedures
}

\author{Philippe Nadeau}
\address{Univ Lyon, CNRS, Université Claude Bernard Lyon 1, UMR 5208, Institut Camille Jordan, 43 Blvd. du 11 Novembre 1918, F-69622 Villeurbanne Cedex, France}
\email{\href{mailto:nadeau@math.univ-lyon1.fr}{nadeau@math.univ-lyon1.fr}}
\thanks{This research is partially supported by the French project ANR19-CE48-011-01 (COMBINÉ)}

\begin{document}

\begin{abstract} We introduce the class of { bilateral}  parking procedures on the discrete line $\bZ$. While cars try to park in the nearest available spot to their right in the classical case, we consider more general parking rules that allow cars to use the nearest available spot to their left. We show that for a natural subclass of local procedures, the number of corresponding { parking functions} of length $r$ is always equal to $(r+1)^{r-1}$. The setting can be extended to probabilistic procedures, in which the decision to park left or right is random. We finally describe how bilateral procedures can naturally be encoded by certain labeled binary forests, whose combinatorics shed light on several results from the literature.
\end{abstract}

\maketitle

%%%%%%%%%%%%%%%%%%%%%%%%%%%%%
\section{Introduction}
\label{sec:introduction}
%%%%%%%%%%%%%%%%%%%%%%%%%%%%%

Classical parking functions are a central object in enumerative and algebraic combinatorics. They are connected to various structures such as noncrossing partitions, hyperplane arrangements, and many others: see for instance the survey~\cite{yan2015parking} and references therein. The corresponding parking procedure $\Spots^{right}$ on $\bZ$ was originally defined as an elementary hashing procedure, cf.~\cite{konheim1966occupancy}.
\smallskip

We recall its definition:  $r$ cars want to park on an empty one-way street with spots labeled by $1,2,\ldots,r$ from left to right. The cars arrive successively, and the $i$th car has a preferred spot $a_i$. If this spot is available, it parks there, and if not it parks in the \emph{nearest available spot to the right}. The sequence $(a_1,\ldots, a_r)$ is called a \highlight{parking function} if at the end, all cars managed to park. The number of parking functions for $r$ cars is given by the simple formula $(r+1)^{r-1}$~\cite{konheim1966occupancy}. 

The following characterization is well known: $(a_1,\ldots, a_r)$ is a parking function if and only if for any $k=1,\ldots,r$, there are at least $k$ indices $i$ such that $1\leq a_i\leq k$. Parking function generalizations have often relied on this description. This is the case of $G$-parking functions and $\mathbf{u}$-parking functions, see \cite{yan2015parking}. Recently, several variations were also considered by varying different aspects of the procedure, see for instance \cite{carlson2021adventure} and references therein.\smallskip

In this work, we will consider the extensions of the parking procedure obtained by simply changing the ``nearest available spot to the right'' condition: we will also allow one to park to the nearest available spot \emph{to the left}. This explains the name bilateral for our class of procedures. 

Let us give already two examples of possible rules. We need only describe where to park when one's preferred spot is occupied:

\begin{itemize}
\item \highlight{$\Spots^{closest}$}: If the nearest available spot to the right is (weakly) closer to $a_i$ than the nearest available spot to the left, park there; otherwise park to the left.
\item \highlight{$\Spots^{prime}$}: If the total number of cars parked between the nearest available spots to the left and to the right is a prime number, park to the right; otherwise park to the left.
\end{itemize}

In general we will describe parking procedures $\Spots$ as functions associating a finite subset of $\bZ$ of cardinality $r$ to a \textit{preference word} $a_1\cdots a_r$, so that $\Spots(a_1\cdots a_r)$ is the set of occupied spots after $r$ cars have parked. If it is equal to $\{1,\ldots,r\}$ then we will say that $a_1\cdots a_r$ is a $\Spots-$parking function.
The notion of bilateral procedure is then easy to encode as conditions on the function $\Spots$. 

We will then state two conditions to determine the subclass of \highlight{local} procedures: roughly put, these say that left/right decisions must be invariant under translation, and depend only on the cars that parked around the desired spot. We then obtain the following striking enumerative result.

\begin{theorem}
\label{theorem:universal}
Let $\Spots$ be a bilateral, local parking procedure. Then the number of $\Spots$-parking functions of length $r$ is given by $(r+1)^{r-1}$.
\end{theorem}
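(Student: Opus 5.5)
Since the precise definitions of \emph{bilateral} and \emph{local} only come later, I will work with the informal description from the introduction. When a car's preferred spot $a$ is occupied, the left/right decision depends only on the maximal block of occupied spots containing $a$ (the cars parked between the nearest free spots on each side). More precisely, it depends on this block's contents up to translation: the arrival order of those cars and the position of $a$ inside the block.

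The plan is to transport Pollak's classical circular argument to the bilateral setting. Put $r$ cars on the cycle $\bZ/(r+1)\bZ$ with $r+1$ spots. Any preference word in $\{1,\ldots,r+1\}^r$ is allowed, and cars park by the procedure $\Spots$ run cyclically: the nearest free spots to the left and to the right are taken cyclically.

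First I check that this is well defined. Because $r<r+1$, at each step at least one spot is free, so every occupied spot lies in a proper cyclic block that looks like an interval of $\bZ$. Locality then gives a meaningful left/right decision, and every car parks. At the end exactly one spot $e$ is empty.

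Next I use rotation invariance. Translating all preferences by $c$ modulo $r+1$ translates the whole parking history by $c$, since the decision is translation invariant. So the empty spot moves to $e+c$. Each rotation orbit of $(\bZ/(r+1)\bZ)^r$ therefore has size exactly $r+1$, and it contains exactly one word whose empty spot is $r+1$. This gives $(r+1)^r/(r+1)=(r+1)^{r-1}$ such words.

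The remaining step, and the main obstacle, is to identify the words on the cycle whose empty spot is $r+1$ with the $\Spots$-parking functions on $\bZ$. I would argue by induction on the number of cars that the cyclic process and the process on $\bZ$ have identical histories, as long as spot $r+1$ stays empty in the cyclic process.

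On $\bZ$, a block inside $\{1,\ldots,r\}$ has the same content whether it is viewed linearly or cyclically. Locality then forces the same decision, and a car leaving through the left or right end lands in the same spot, $0\equiv r+1$ or $r+1$ respectively.

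So the following are equivalent:
\begin{itemize}
\item the cyclic process never fills spot $r+1$;
\item the linear process never leaves $\{1,\ldots,r\}$ and all preferences lie in $\{1,\ldots,r\}$.
\end{itemize}
The second condition is exactly the statement that $\Spots(a_1\cdots a_r)=\{1,\ldots,r\}$.

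One case needs care: a preference equal to $r+1$ in the cyclic model. Either that car occupies $r+1$, or $r+1$ is already occupied; in both cases the empty spot cannot be $r+1$. So these words are excluded on both sides, matching the fact that a parking function on $\bZ$ can have no preference outside $\{1,\ldots,r\}$. I should also rule out a linear car reaching a spot $\le 0$ or $\ge r+1$ via the left rule. Such a car exits through spot $0$ or spot $r+1$, which corresponds cyclically to filling spot $r+1$.

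The subtle point is making sure the local rule cannot "see" distances to spots beyond the block, such as the full cycle length. That is exactly what the locality hypothesis should guarantee, and I would verify it against the formal definition when it appears.
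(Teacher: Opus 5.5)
Your proposal is correct and is essentially the paper's proof. The paper also runs Pollak's argument through a cyclic version $\Spots_r$, whose decision on a cyclic block is the linear decision on the rotated block history $\rho^k(W_{|I}a)$. It then proves rotation equivariance, and shows that $\Spots_r(W)=\{1,\ldots,r\}$ exactly when $W$ is $\Spots$-parking by comparing histories prefix by prefix using local decision and shift invariance. The one correction is that the formal local data is the translated subword $W_{|I}a$ of the \emph{preferences} of the cars in the block, not just their arrival order; your argument goes through verbatim with this.
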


In particular this holds for the classical procedure denoted $\Spots^{right}$, for $\Spots^{closest}$ and $\Spots^{prime}$ defined above (and for uncountably many other procedures). 
One may interpret Theorem~\ref{theorem:universal} as a kind of discrete \emph{universality} result: a unique formula holds for a large classe of procedures where only local conditions are imposed. %The terminology is borrowed from statistical physics. 
The proof of this result will follow from Pollak's argument in the classical case, based on a cyclic procedure derived from the original one.

We can naturally add randomness to a bilateral procedure: instead of choosing either left or right, pick probabilities for the two options. Each preference word now has a certain probability to be $\Spots$-parking. For a particular procedure, we will see that these probabilities, suitably normalized, coincide with the family of remixed Eulerian numbers studied by Vasu Tewari and the author~\cite{nt2023remixed}. The notion of \emph{local} procedure extends to the probabilistic setting, and a version of Theorem~\ref{theorem:universal} holds in this case.

From the point of view of bijective combinatorics, we show that bilateral procedures are naturally encoded by certain pairs of labeled forests. This bijection naturally lifts the original parking procedure, and is directly connected to the \textit{outcome function} of the procedure, which encodes the order in which the cars parked. This encoding recovers some known results, including in the classical case.\medskip

\noindent{\bf Outline. }We first describe rigorously the parking procedures $\Spots$  that we consider in Section~\ref{sec:bilateral}. We will focus on bilateral procedures, and then introduce the subclass of local procedures in Section~\ref{sec:local}. We will then see that the enumeration $(r+1)^{r-1}$ is in a sense universal for local procedures, see Theorem \ref{theorem:universal}. We then explain how to define a probabilistic version of our procedures in Section~\ref{sec:probabilistic}. We describe some natural connection with the combinatorics of binary trees via a natural encoding in Section~\ref{sec:binary_trees}. We finally briefly define a ``colored'' version of the model in the last section.

%%%%%%%%%%%%%%%%%%%%%%%%%%%%%
\section{Bilateral and local parking procedures}
\label{sec:bilateral}
%%%%%%%%%%%%%%%%%%%%%%%%%%%%%

 As stated in the introduction, the list of parking spot preferences of incoming cars is given by a preference word $a_1a_2\cdots a_r$ with $a_i\in\bZ$: the $i$th car wants to park in the spot $a_i$. A general parking procedure $\Spots$ goes as follows: it is defined inductively, with no cars parked at the beginning. Assume the first $i-1$ cars have parked at distinct spots in $\bZ$. If the spot $a_i$  is available, the $i$th car parks there. Otherwise, there is a rule that determines an available parking spot for the car $a_i$. In the classical parking procedure, this rule consists in choosing systematically the nearest available spot to the right. Bilateral procedures also allow the spot to be the nearest available spot to the left.

\begin{remark}
It is more customary to define parking procedures on an interval $\{1,\ldots,r\}$, and say that the procedure fails as soon as a car cannot park. A parking function is then an entry word of length $r$ where everyone managed to park. Here we define our procedures on $\bZ$, which is more pleasing from a mathematical point of view since we don't have to deal with partially defined functions, as all cars will find an available spot. A procedure is successful if the occupied spots at the end are $\{1,2,\ldots,r\}$ (see Definition~\ref{definition:parking_function}), which is how the boundary conditions are integrated to the setting.
\end{remark}

\subsection{Parking procedures} Let $\fin(\bZ)=\{I\subset\bZ~|~\#I<+\infty\}$ be the collection of finite subsets of $\bZ$. A parking procedure $\Spots$ is determined by the subsets $\Spots(a_1\cdots a_r)$ representing the set of occupied spots after cars with preferences $a_1,\ldots,a_r\in \bZ$ have parked successively. 

This leads to the following definition:

\begin{definition}[\highlight{Parking procedure}]
\label{definition:parking_procedure}
A function $\Spots:\bZ^*\to \fin(\bZ)$ is a parking procedure if $\Spots(\epsilon)=\emptyset$ and for any $r\geq 1$ and any word $a_1\cdots a_r$,
\begin{enumerate} 
\item The subset $\Spots(a_1\cdots a_r)$ has cardinality $r$, and for any $i<r$ \[\Spots (a_1\cdots a_i)\subset\Spots(a_1\cdots a_{i+1});\]
\item If $a_r\notin\Spots(a_1\cdots a_{r-1})$, then $\Spots(a_1\cdots a_{r})=\Spots(a_1\cdots a_{r-1})\sqcup \{a_r\}$
\end{enumerate}
\end{definition}

The first condition indicates that everyone manages to park (and stays put afterwards), and the second one expresses that one parks at their preferred spot if it is available. Note that at this stage cars may possibly park at any available spot, possibly very far, when their desired spot is not available. We will soon restrict to bilateral procedures.

\begin{remark} These conditions capture reasonable conditions of what a ``real-life'' parking procedure on $\bZ$ should satisfy. Note however that in a recent work~\cite{harris2022outcome}, cars can be dislodged from their parking spot by a later car, which is not covered by our setup.
\end{remark}

\subsection{Memoryless parking procedures}
Definition~\ref{definition:parking_procedure} includes the possibility that the parking decisions may depend on the whole sequence of preferred spots $a_1,\ldots,a_i$. A natural subclass of parking procedures, which includes the classical one $\Spots^{right}$ as well as $\Spots^{prime},\Spots^{closest}$ from the introduction, consists of those where the parking decisions only depend on the set of occupied spots. This can be expressed as follows:

\begin{definition}
A parking procedure $\Spots$ is \highlight{memoryless} if there exists a function $M_{\Spots}:\fin(\bZ)\times \bZ\to\bZ$ such that $\Spots(a_1\cdots a_ra)=\Spots(a_1\cdots a_r)\sqcup \{M_\Spots(\Spots(a_1\cdots a_r),a)\}$ for any  $a_1,\cdots,a_r,a$.
\end{definition}

The function $M_\Spots$  characterizes the parking procedure $\Spots$ by immediate induction. By definition of our procedures, it necessarily satisfies $M(S,a)\notin S$ for any $S,a$, and $M(S,a)=S\sqcup\{a\}$ whenever $a\notin S$. Conversely, any family of integers $M(S,a)\in\bZ$ for $a,S$ such that $a\in S$, that satisfy $M(S,a)\notin S$ defines a memoryless parking procedure. $M(S,a)$ specifies the spot where a car with preferred spot $a$ parks, when $a$ belongs to the set $S$ of occupied spots.\smallskip

Let us now give an example of a parking procedure that is \emph{not} memoryless. 
\begin{example}
The procedure \highlight{$\Spots^{LBS}$} is introduced and studied by Vasu Tewari and the author in~\cite{nt2024forest} under the name $\Omega$. It is defined inductively as follows: Let $W=a_1\cdots a_r$ be any word and let $a\in \Spots^{LBS}(W)$. Let $I$ be the maximal interval of $\Spots^{LBS}(W)$ such that $a\in I$. Define $j\in\{1,\ldots,r\}$ to be the largest integer such that $a_j\in I$: in words, the $j$th car is the last car that parked on $I$. Then $\Spots^{LBS}(Wa)$ is defined by parking in the nearest spot available left of $I$ if $a<a_j$, and right of $I$ if $a\geq a_j$. 
 
  Note that one needs to remember, for each maximal interval, where the last car that parked there had wanted to park: the procedure is thus not memoryless. To give an explicit example, it sends both words $12$ and $21$ to the set $S=\{1,2\}$ (this is true for any procedure of course as preferred spots are available). But it sends $121$ to $\{0,1,2\}$ and $211$ to $\{1,2,3\}$.
\end{example}

The reader will have no trouble coming up with other, simpler examples. We included this one since it was the main inspiration for this work.

\subsubsection{$\Spots$-parking functions} We finally come to the definition of parking functions associated to a parking procedure.

\begin{definition}%[\highlight{$\Spots$-parking functions}]
\label{definition:parking_function}
 Let $\Spots$ be a parking procedure. A word $a_1\cdots a_r$ is parking for $\Spots$, or a {\highlight{$\Spots$-parking function}}, if $\Spots(a_1\cdots a_r)=\{1,\ldots,r\}$. 
\end{definition}

It is clear that this coincides with the usual notion of parking functions when $\Spots=\Spots^{right}$. Let \highlight{$\Park(\Spots)$} be the set of parking functions for $\Spots$, and \highlight{$\Park_r(\Spots)$} those of length $r$. The standard enumeration result recalled in the introduction is $\#\Park_r(\Spots^{right})=(r+1)^{r-1}$.% We extend this to a large class of parking procedures.

An immediate remark is that for any $\Spots$-parking procedure, all $r!$ permutations of the word $12\cdots r$ are in $\Park_r(\Spots)$: indeed these are the cases where everyone finds their preferred spots available. 

\subsubsection{Functions $\ls_\Spots$ and $\pi_\Spots(W)$} Let us define two auxiliary functions associated to a parking procedure $\Spots$.

The function \highlight{$\ls_\Spots$} from $\bZ^*$ to $\bZ$  associates to a preference word the spot in $\bZ$ where the last car parks. Explicitly, $\ls_\Spots(a_1\cdots a_i)=k$ where $\{k\}=\Spots (a_1\cdots a_i)\setminus\Spots(a_1\cdots a_{i-1})$.

 If $W=a_1\cdots a_r$ is any word, then the \highlight{outcome function $\pi_\Spots(W)$} is defined as follows:
 \[\pi_\Spots(W):\Spots(W)\to \mathbb{Z},\quad i\mapsto \ls(a_1a_2\cdots a_i).\] 
 It is the injection that associates to an occupied spot $k$ the integer $i$ such that the $i$th car parked in $k$. When $W$ is $\Spots$-parking, we have that $\pi_\Spots(W)$ is a permutation of $\{1,\ldots,r\}$.
 
 A natural question, well studied in the case of the classical parking procedure, is to compute, given a permutation $\sigma$, the number of (parking) words $W$ such that $\pi_\Spots(W)=\sigma$. This naturally partitions parking words of length $r$ in $r!$ classes. We will come back to this question in Section~\ref{sec:binary_trees}.
 
%%%%%%%%%%%%%%%%%%%%%%
\subsection{Bilateral parking procedures} 
\label{sub:bilateral}

 In this work, we will focus on {\em bilateral} parking procedures: when one's preferred parking spot is not available, then the chosen spot must be either the nearest available spot to the left or to the right\footnote{Even though our goal is not to define real-life parking procedures, the constraint is fairly natural in order for cars to travel a small extra distance.}. Note that all procedures considered up to this point are bilateral.
 
  Let us formulate this in mathematical language, and fix some terminology and notations in the process. Given a subset $S$ of $\bZ$, we say that a discrete interval $I=\{t,t+1,\ldots,u\}$ with $t\leq u$ in $\bZ$ is a \highlight{block} of $S$ if it is included in $S$, and maximal for this property with respect to inclusion. For instance, if $S=\{2,3,5,6,7,8,9,12\}$, its blocks are $\{2,3\}, \{5,6,7,8,9\}$ and $\{12\}$.
 
\begin{definition}[\highlight{Bilateral parking procedure}]
A parking procedure $\Spots$ is \emph{bilateral} if the following holds: For any word $W$ and letter $a$ such that $a\in \Spots(W)$, let $I=\{t,t+1,\ldots,u\}$ be the block of $\Spots(W)$ such that $a\in I$, one has $\ls_\Spots(Wa)\in\{t-1,u+1\}$.
\end{definition} 
  
 Thus to define a bilateral procedure $\Spots$, it suffices to determine a rule that picks either the spot \emph{left} of the block ($\ls_\Spots(Wa)=t-1$) or \emph{right} ( $\ls(Wa)=u+1$)  whenever $a\in \Spots(W)$. We define the direction \highlight{$\dir_\Spots$} by $\dir_\Spots(Wa)\in\{left,right\}$ accordingly, and will more conveniently define bilateral procedures by specifying $\dir_\Spots$. For memoryless procedures, we will naturally abuse notation and write $\dir_\Spots(S,a)$ for $\dir_\Spots(Wa)$ when $S=\Spots(W)$.

\begin{example}
\label{example:kNaples}
Let $k\in \bZ_{>0}$. The \highlight{$k$-Naples parking procedure} {$\Spots_+^{Nap,k}$} is introduced in \cite{christensen2020naples} and further studied in \cite{colmenarejo2021countingknaples}. The procedure is defined on positive integers, i.e. goes from $(\bZ_{>0})^*$ to $\fin(\bZ_{>0})$. It is a variation of the classical parking procedure where one allows cars to back up, up to $k$ spots, to find an available spot.

We give an extension \highlight{$\Spots^{Nap,k}$} to $\bZ^*$ here --note that in order to define parking functions, only the original procedure plays a role, so our notion coincides with the one from \cite{christensen2020naples},\cite{colmenarejo2021countingknaples}. When a driver finds its preferred spot $a$ occupied, it goes to the next available spot to the right if $a\leq 0$. If $a>0$, it checks spots $a-1,a-2,\ldots a-k$ in order and parks to the first available one, say $a-j$, if it exists and satisfies $a-j>0$; otherwise it takes the first available spot to the right. 

Using our notations, $\Spots^{Nap,k}$ is the memoryless procedure characterized as follows: for $S\in\fin(\bZ)$ and $a\in S$, let $I=\{t,t+1,\ldots,u\}$ be the block that contains $a$. Then $\dir(S,a)=left$ if $t>1$ and $a-t<k$, and $\dir(S,a)=right$ otherwise.
\end{example}
 
%%%%%%%%%%%%%%%%%%%%%%%%%%%%%%%%%%%%%%
\section{Local parking procedures}
\label{sec:local}
%%%%%%%%%%%%%%%%%%%%%%%%%%%%%%%%%%%%%%

\subsection{Shift invariance and local decision}
\label{sub:twoproperties}

We now come to two extra constraints on a procedure $\Spots$. Informally, we require it to be invariant under translation, and that left/right decisions must depend only on the subsequence of cars that parked on the block.

\subsubsection{Shift invariance} Let \highlight{$\tau:i\mapsto i+1$} be the shift on $\bZ$. It extends to subsets of $\bZ$ or words in $\bZ^*$ naturally: for instance $\tau(\{2,3,5\})=\{3,4,6\}$ and $\tau(523)=634$.

\begin{definition}[\highlight{Shift invariance}]
\label{definition:shift_invariance}
A bilateral parking procedure $\Spots$ is said to be \emph{shift invariant} if for any word $W$, one has $\dir_\Spots(\tau(W))=\dir_\Spots(W)$. 
\end{definition}

This condition can be equivalently written as $\Spots(\tau(W))=\tau(\Spots(W))$, which allows this definition to be extended to non necessarily bilateral procedures.

\begin{example}
The procedures $\Spots^{right},\Spots^{prime},\Spots^{closest}, \Spots^{LBS}$ are all shift invariant. The $k$-Naples procedure $\Spots^{Nap,k}$ for $k\geq 1$ is not shift-invariant since it sends both $11$ and $22$ to $\{1,2\}$. 

Another simple case of a procedure that is not invariant is the (bilateral, memoryless) procedure \highlight{$\Spots^{evenodd}$} defined as follows: if the  desired spot $a_i$ is occupied, park ot the right if $a_i$ is even, and left if $a_i$ is odd.
\end{example}

\subsubsection{Local decision} If $I$ is any subset of $\Spots(W)$, let \highlight{$W_{|I}$} be the subword of $W$ given by the letters $a_i$ such that the $i$th car parked in $I$. We will only use this in the case where $I$ is a block of $S$.

\begin{definition}[\highlight{Local decision}]
\label{definition:local_decision}
A parking procedure is called locally decided if for any word $W$, letter $a$ and $I$  block of $\Spots(W)$ such that $a\in I$, we have that $\ls_\Spots(Wa)=\ls_\Spots(W_{|I}a)$.
\end{definition}

This can be interpreted roughly as the choice of a parking spot only depends on the preferred spots of the drivers who parked in the block where one wants to park. All examples of procedures defined until now are locally decided. It is easy to define one that is not, and we will an example in Section~\ref{sub:enumeration}.

\subsection{Local procedures} We now combine the two previous properties to obtain a class of particular interest.

\begin{definition}[\highlight{Local parking procedures}]
\label{definition:local}
A parking procedure $\Spots$ is \emph{local} if it is both shift-invariant and locally decided. 
\end{definition}

Let us give two immediate consequences of the definition. First, the next proposition shows that in order to study words resulting in a fixed set of parking spots $S$, it suffices to know parking functions.

\begin{proposition}
\label{prop:local_general_to_parking}
 Let $\Spots$ be a bilateral, local procedure. Let $S\in \fin(\bZ)$, with blocks $I_1,\ldots,I_m$ with $I_j=\{t_j+1,\ldots,t_j+r_j\}$ for $j=1,\ldots,m$.
 
 Then a word $W$ satisfies $\Spots(W)=S$ if and only if it is a shuffle of words $W_1,\ldots,W_m$ where $\tau^{-t_j}W_j$ is $\Spots$-parking of length $r_j$.

It follows that the total number of such words $W$ is 
\[\#\{W~|~\Spots(W)=S\}=\binom{r}{r_1,\ldots,r_m}\#\Park_{r_1}(\Spots)\cdots\#\Park_{r_m}(\Spots).\]
\end{proposition}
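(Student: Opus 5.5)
The key fact to establish first is that, for a bilateral local procedure, the cars that end up in a given block of the final configuration evolve "independently" of the others. Precisely, I would prove the following lemma by induction on the length of $W$.

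\emph{Lemma.} Let $W$ be any word and $J$ any block of $\Spots(W)$. Then $\Spots(W_{|J})=J$. Moreover, for each prefix $W'$ of $W$, the subset $\Spots(W')\cap J$ is a union of blocks of $\Spots(W')$, and $\Spots(W'_{|J})=\Spots(W')\cap J$.

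The inductive step appends a letter $a$ to $W'$. By bilaterality, the car either parks at $a$ (if $a$ is free) or at $t-1$ or $u+1$, where $I=\{t,\ldots,u\}$ is the block of $\Spots(W')$ containing $a$. Since $J$ is eventually a block of $\Spots(W)$ and occupied spots are never freed, each block of $\Spots(W')$ lies either inside $J$ or disjoint from $J$ and not adjacent to it. Hence the car lands in $J$ if and only if $I\subset J$, or, when $a$ is free, if and only if $a\in J$. In that case, local decision gives $\ls_\Spots(W'a)=\ls_\Spots(W'_{|I}a)$. Applying local decision again to the word $W'_{|J}$, whose occupied set is $\Spots(W')\cap J$ and which has $I$ as a block with $(W'_{|J})_{|I}=W'_{|I}$, we get $\ls_\Spots(W'_{|J}a)=\ls_\Spots(W'_{|I}a)$. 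So the same spot is chosen in both runs. The main care needed is in this bookkeeping: checking that blocks of intermediate configurations do not straddle $J$ or touch it from outside. This holds because a spot adjacent to $J$ but outside it stays empty forever, so no block can cross into or abut $J$ from outside.

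With the lemma in hand, the "only if" direction follows directly. If $\Spots(W)=S$, then $W$ is a shuffle of the words $W_j=W_{|I_j}$, and $\Spots(W_j)=I_j=\tau^{t_j}\{1,\ldots,r_j\}$. Shift invariance gives $\Spots(\tau^{-t_j}W_j)=\{1,\ldots,r_j\}$, so each $\tau^{-t_j}W_j$ is $\Spots$-parking.

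For the "if" direction, let $W$ be a shuffle of such $W_j$, so that $\Spots(W_j)=I_j$ by shift invariance. I would show by induction along $W$ that after each prefix the occupied set is the disjoint union over $j$ of $\Spots(\text{prefix of }W_j)\subset I_j$. Since these sets lie in distinct blocks $I_j$ separated by gaps, any block of the current configuration lies in a single $I_j$. Local decision then lets each new car behave exactly as it would in the isolated run of $W_j$. I would check two cases. If the car's letter is a free spot, it parks there, as in the isolated run. If the letter lies in an occupied block $I\subset I_j$, local decision reduces both runs to $\ls_\Spots(V_{|I}a)$, where $V$ is the relevant prefix. In both cases the car lands in $I_j$, and at the end $\Spots(W)=S$.

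Finally, the map $W\mapsto (W_1,\ldots,W_m,\text{positions})$ is a bijection, because the positions of letters of $W_j$ are determined by which block the corresponding car parks in. The number of ways to choose the interleaving positions is $\binom{r}{r_1,\ldots,r_m}$, and each $W_j$ ranges over a shifted copy of $\Park_{r_j}(\Spots)$. This gives the stated count.
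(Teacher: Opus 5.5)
Your proof is correct and follows the same route as the paper: restrict $W$ to the cars that park in each final block, use local decision to get $\Spots(W_{|I_j})=I_j$ and the converse for shuffles, then use shift invariance to normalize each block to $\{1,\ldots,r_j\}$. Your inductive lemma shows that intermediate blocks never straddle or touch a final block, so each block's cars evolve exactly as in the isolated run; this spells out the step the paper dismisses as following ``readily'' from Definition~\ref{definition:local_decision}.
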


\begin{proof}
    Let $W_j$ be the subword of $W$ formed by cars that parked in $I_j$. It follows readily from Definition~\ref{definition:local_decision} that $\Spots(W_j)=I_j$. By Definition~\ref{definition:shift_invariance}, $\tau^{-t_j}W_j$ is $\Spots$-parking. Conversely, any shuffle $V$ of words $V_i$ such that $\Spots(V_j)=I_j$ will satisfy $\Spots(V)=S$. The formula follows immediately.
\end{proof}

As $\#\Park_{r}(\Spots)=(r+1)^{r-1}$ by Theorem~\ref{theorem:universal}, proved in the next section, we get a product formula for the number $\#\{W~|~\Spots(W)=S\}$. 

A second consequence is that in the \emph{memoryless} case, the class of local procedures is particularly easy to describe.

\begin{proposition}
\label{proposition:localmemoryless}
A local, memoryless, bilateral parking procedure $\Spots$ is determined by the data of $m_\Spots(r,i)\coloneqq M_\Spots(\{1,\ldots,r\},i)\in\{0,r+1\}$ for all $1\leq i\leq r$, or equivalently of the directions $\dir_\Spots(r,i)\coloneqq \dir_\Spots(\{1,\ldots,r\},i)$.

 Conversely, any choice of values $D(r,i)\in\{left,right\}$ for all $1\leq i\leq r$ determines a local, memoryless, bilateral parking procedure $\Spots$.% such that $\dir_\Spots=D$.
\end{proposition}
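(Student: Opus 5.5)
The plan is to prove the two directions separately, using only the definitions of memoryless, bilateral, shift invariant and locally decided.

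\emph{First direction.} Let $\Spots$ be local, memoryless and bilateral. By memorylessness, $\Spots$ is determined by the values $M_\Spots(S,a)$ with $a\in S$. Fix such $S$ and $a$, and let $I=\{t+1,\ldots,t+r\}$ be the block of $S$ containing $a$. Choose any word $W$ with $\Spots(W)=S$; for instance, a word listing the elements of $S$ works, since every car then finds its preferred spot free.
\begin{enumerate}
\item By local decision, $\ls_\Spots(Wa)=\ls_\Spots(W_{|I}a)$, and $\Spots(W_{|I})=I$, as in the proof of Proposition~\ref{prop:local_general_to_parking}.
\item By memorylessness, $\ls_\Spots(W_{|I}a)=M_\Spots(I,a)$.
\item By shift invariance, $\Spots(\tau^{-t}V)=\tau^{-t}\Spots(V)$ for every word $V$. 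Apply this to $V=W_{|I}a$. Since $\Spots(\tau^{-t}W_{|I})=\{1,\ldots,r\}$, memorylessness gives
\[M_\Spots(I,a)=t+M_\Spots(\{1,\ldots,r\},a-t)=t+m_\Spots(r,a-t).\]
\end{enumerate}
Bilateralness guarantees $m_\Spots(r,i)\in\{0,r+1\}$. Hence every $M_\Spots(S,a)$ is expressed through the values $m_\Spots(r,i)$, so these data determine $\Spots$.

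\emph{Second direction.} Given arbitrary $D(r,i)\in\{left,right\}$, define a memoryless procedure by its function $M$. For $a\in S$, let $I=\{t+1,\ldots,t+r\}$ be the block of $S$ containing $a$, and set $M(S,a)=t$ if $D(r,a-t)=left$ and $M(S,a)=t+r+1$ otherwise. This spot is not in $S$, by maximality of the block. By the converse remark after the definition of memoryless procedures, $M$ defines a memoryless parking procedure $\Spots$. It is bilateral by construction, since the chosen spot is adjacent to the block.

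\emph{Shift invariance.} By induction on the length of $W$, one shows $\Spots(\tau W)=\tau\Spots(W)$. The key point is that $M(\tau S,\tau a)=\tau M(S,a)$, because blocks shift along with $S$ and the quantities $r$ and $a-t$ are unchanged.

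\emph{Local decision.} This is the step needing the most care. For a word $W$, a letter $a$ and the block $I$ of $\Spots(W)$ containing $a$, one must check that $\Spots(W_{|I})=I$. Then $I$ is its own single block, with the same $t$ and $r$, so both $\ls_\Spots(Wa)$ and $\ls_\Spots(W_{|I}a)$ equal the same value $M(\cdot,a)$.

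To prove $\Spots(W_{|I})=I$, I will show by induction on prefixes of $W$ the following claim: for every block $J$ of $\Spots(a_1\cdots a_k)$, we have $\Spots(W_{|J})=J$, where $W_{|J}$ is taken in the prefix. Consider how the next car changes the blocks.
\begin{itemize}
\item If it parks at a free preferred spot, or next to a block decided by that block alone, it may create a new block, extend a block, or merge two or three old blocks into one.
\item When blocks merge, the subword for the new block is a shuffle of the old subwords together with the new letter.
\end{itemize}
So I must check that processing a shuffle of words occupying separated blocks reproduces each block independently. Parking decisions inside one block never look at another block's cars, and blocks stay disjoint and keep their positions until the very step where they merge. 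The last car's position depends only on the block it targeted, which by induction is reproduced in $W_{|J}$.

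This shuffle-compatibility lemma is the main obstacle. I expect to handle it by a single induction on $k$, showing simultaneously that for any shuffle of words whose runs produce non-adjacent blocks, the result is the union of those blocks.
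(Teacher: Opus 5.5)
Your proposal is correct. It is essentially the unwinding of definitions the paper intends (the paper only says the proof follows readily from them), and you rightly isolate the one nontrivial point, $\Spots(W_{|I})=I$ for the procedure built from $D$, which the paper treats as immediate both here and in Proposition~\ref{prop:local_general_to_parking}. Your shuffle lemma works, but a direct induction over the cars of $W_{|I}$ is shorter: if $W'$ is the prefix before such a car, the earlier cars of $W_{|I}$ occupy $\Spots(W')\cap I$; the block $J$ of $\Spots(W')$ that the car hits lies in $I$, since the car ends up adjacent to $J$ inside $I$; and $J$ is still a block of $\Spots(W')\cap I$, so the car makes the same choice.
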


The proof follows readily from the definitions. Here are the explicit values $\dir_\Spots(r,i)$ in the memoryless examples encountered so far:
\begin{itemize}
\item For the usual procedure $\Spots^{right}$, we have $\dir(r,i)=right$ for any $r,i$.
\item For $\Spots^{closest}$, we have $\dir(r,i)=left$ if $i\leq r/2$ and  $\dir(r,i)=right$ if $i>r/2$. 
\item For $\Spots^{prime}$, we have $\dir(r,i)=right$ if $r$ is prime and $\dir(r,i)=left$ if $r$ is composite.
\end{itemize}

%%%%%%%%%%%%%%%%%%%%%%%%%%%%%
\subsection{Enumeration}
\label{sub:enumeration}

We will now prove Theorem~\ref{theorem:universal}, which says that for any bilateral local procedure $\Spots$, the number of $\Spots$-parking functions of length $r$ is given by $(r+1)^{r-1}$; the first values for $r=1,2,3,4$ are $1,3,16,125$. \smallskip

The proof is based on an argument of Pollak in the classical case, as found in~\cite{riordan1969ballots,foata1974mappings}. Let $\bZ^{i}_{[r+1]}$ be the set of words of length $i\leq r$ and with letters in $\{1,\ldots,r+1\}$. By identifying $\{1,\ldots,r+1\}$ with  $\bZ/(r+1)\bZ$, the cyclic group $\bZ/(r+1)\bZ$ then acts on $\{1,\ldots,r+1\}$ by addition, and thus on $\bZ^{i}_{[r+1]}$ by acting diagonally. Explicitly, let \highlight{$\rho=\rho_r$} be defined as 
\[\rho:\bZ/(r+1)\bZ\to \bZ/(r+1)\bZ,\quad i\mapsto i+1.\]
 which we extend naturally to words by $\rho(a_1\cdots a_i)=\rho(a_1)\cdots \rho(a_i)$. Thus each orbit of words has the form  $\{W,\rho(W),\ldots,\rho^r(W)\}$; we call such orbits ``cyclic''. Equivalently, they correspond to the cosets of the subgroup generated by $(1,\ldots,1)$ in $\bZ^{i}_{[r+1]}$ (considered as the additive group $(\bZ/(r+1)\bZ)^i$).

\begin{proposition}
\label{prop:cyclic_lemma}
Let $\Spots$ be a bilateral, local procedure, and $r\geq 1$. 
There is exactly one $\Spots$-parking function in each cyclic orbit in $\bZ^{r}_{[r+1]}$.
\end{proposition}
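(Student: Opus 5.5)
We follow Pollak's argument: define a cyclic version $\Spots^{\circ}$ of the procedure on $\bZ/(r+1)\bZ$ and use it to process every word of $\bZ^{r}_{[r+1]}$. Since only $r$ cars park among $r+1$ spots, exactly one spot stays empty at the end. We will then show two things. First, $\Spots^{\circ}$ commutes with $\rho$. Second, a word is $\Spots$-parking exactly when the empty spot of $\Spots^{\circ}$ is $r+1$.

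The cyclic procedure is defined inductively. Suppose the occupied set $S\subset \bZ/(r+1)\bZ$ has at most $r-1$ elements, and a car arrives with preference $a\in S$. Let $I$ be the cyclic block of $S$ containing $a$. It is a proper cyclic interval, so there is a lift $\{t,\ldots,u\}\subset\bZ$, unique up to translation by multiples of $r+1$. Let $V$ be the subword of the cars that parked in $I$, lifted to $\bZ$ compatibly with this lift. Then $\Spots(V)=\{t,\ldots,u\}$, because the cars in a block arrive consistently with local decision. We park the new car at $\ls_\Spots(Va)\bmod (r+1)$, which is either $t-1$ or $u+1$. Shift invariance guarantees that this choice does not depend on which lift we took. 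The construction is then $\rho$-equivariant by design, since only relative positions enter. Hence $\Spots^{\circ}(\rho W)=\rho\,\Spots^{\circ}(W)$, and the missing spot of $\rho^k W$ is the missing spot of $W$ shifted by $k$. The $r+1$ words of an orbit therefore have $r+1$ distinct missing spots, and exactly one of them has missing spot $r+1$. In particular every orbit has exactly $r+1$ elements.

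It remains to check that, for $W$ with letters in $\{1,\ldots,r+1\}$, $W$ is $\Spots$-parking if and only if the missing spot of $\Spots^{\circ}(W)$ is $r+1$. We prove by induction on $i$ that, as long as the linear run of $\Spots$ on $a_1\cdots a_i$ stays inside $\{1,\ldots,r\}$, the two runs coincide.
\begin{itemize}
\item The blocks of the linear occupied set inside $\{1,\ldots,r\}$ are also cyclic blocks. The one exception is that a block containing $1$ and a block containing $r$ are cyclically separated by the spot $r+1$, which is still free, so they are not merged either.
\item By local decision, the linear choice depends only on $W_{|I}a$. This is exactly the data the cyclic rule uses.
\end{itemize}

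Forward direction: if $W$ is $\Spots$-parking, no car ever leaves $\{1,\ldots,r\}$. A car with preference $r+1$ would immediately occupy spot $r+1$, so no such preference occurs. The two runs therefore agree throughout, and spot $r+1$ stays empty.

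Converse: suppose the linear run fails. Take the first car whose linear spot is outside $\{1,\ldots,r\}$; its spot is $0$, $r+1$, or a preference equal to $r+1$. Up to that car the runs agree, and the car's choice is determined by the same local data in both runs. In the cyclic run, linear spot $0$ and linear spot $r+1$ both reduce to the residue $r+1$ (note $0\equiv r+1 \pmod{r+1}$). So the cyclic procedure fills $r+1$, and the missing spot is not $r+1$.

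The main obstacle is making the cyclic procedure well defined. Local decision is stated for linear words, so we must check that cars in a cyclic block, once lifted, really form a word $V$ with $\Spots(V)$ equal to the lifted block. This holds inductively: a block only grows by a car that targeted it, and when two blocks merge the relevant subword is a shuffle. The shuffle case is handled exactly as in Proposition~\ref{prop:local_general_to_parking}.

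Counting then finishes the argument. There are $(r+1)^r$ words and every orbit has $r+1$ elements, so there are $(r+1)^{r-1}$ orbits, which gives Theorem~\ref{theorem:universal}.
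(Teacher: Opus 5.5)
Your proposal is correct and follows essentially the paper's route: Pollak's cyclic argument, with a cyclic version of $\Spots$ that is $\rho$-equivariant and agrees with $\Spots$ as long as spot $r+1$ stays empty. The only difference is that you define the cyclic rule directly from the lifted block subword $W_{|I}a$, which the paper notes is equivalent by locality to its $\dir(\rho^k(Wa))$, and you spell out the well-definedness of that rule and the converse direction, both of which the paper leaves implicit.
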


\begin{proof}[Proof of Theorem~\ref{theorem:universal}]
It is clear that $\Park_r\subset \bZ^{r}_{[r+1]}$, the latter being of cardinal $(r+1)^r$. Proposition~\ref{prop:cyclic_lemma} then tells us that a fraction $1/(r+1)$ of these words are parking functions, which proves the desired enumeration.
\end{proof}

 For the proof of Proposition~\ref{prop:cyclic_lemma}, we will introduce a cyclic version of the procedure $\Spots$. By a \highlight{cyclic parking procedure}, we mean a procedure with domain $\bZ^{\leq r}_{[r+1]}$ and codomain $\fin(\{1,\ldots,r+1\})$ that satisfies the properties of Definition~\ref{definition:parking_procedure}. Cyclic intervals are well-defined as sets of consecutive integers, so we can also define cyclic blocks as the maximal such intervals in a set $S$.  Bilateral cyclic parking procedures thus make sense, and can be specified by a direction function $\dir$.

\begin{proof}[Proof of Proposition~\ref{prop:cyclic_lemma}]
We define the cyclic procedure $\Spots_{r}$ by induction, by specifying its direction function $\dir_r$. Suppose that it is defined on all words of length up to $i<r$, and let $W$ have length $i$. Let $S\coloneqq\Spots_{r}(W)$, $a\in S$, and let $I$ be the cyclic block of $S$ containing $a$ and $V=W_{|I}$. By applying a suitable rotation $\rho^k$, we have $\rho^k(I)=\{1,\ldots,j\}$ for a certain $j$. 

We then define $\dir_{r}(Wa)\coloneqq\dir(\rho^k(Wa))$. Let us remark that at this point, the local property has not been used, and the definition makes thus sense for any procedure. Note that $\Spots$ being local implies that this is also equal to $\dir(\rho^k(Va))$. Now we claim that:
\begin{enumerate}
\item For any $W$, $\Spots_{r}(\rho(W))=\rho(\Spots_{r}(W))$. \label{claim1}
\item $W$ is a $\Spots$-parking word if and only if $\Spots_{r}(W)=\{1,\ldots,r\}$. \label{claim2}
\end{enumerate}

The two claims together imply the proposition: the first claim implies that each orbit contains exactly one word $W$ with $\Spots_{r}(W)=\{1,\ldots,r\}$, while the second one claims that these are precisely the $\Spots$-parking words. Let us now prove these claims.

Claim~\eqref{claim1}  follows readily from our definition of $\Spots_{r}$: keeping the notations from the above paragraph, one needs to show that $\dir_r(Wa)=\dir_r(\rho(Wa))$ for any $a,W$ with $a\in\Spots_{r}(W)$, and indeed both are given by $\dir(\rho^k(Wa))=\dir(\rho^{k-1}\rho(Wa))$. For Claim~\ref{claim2}, note that $r+1$ is missing from $\Spots_{r}(W)$ at the end if and only if it was missing at each step. If $W'$ is a $\Spots$-parking word, let us show that  $\Spots(W)=\Spots_{r}(W)$ for each prefix. We need to show that $\dir(Wa)=\dir_r(Wa)$ for each prefix $Wa$ of $W'$ with $a\in \Spots(W)$. Since $\Spots$ is local, $\dir(Wa)=\dir(Va)$  while $\dir_{r}(Wa)=\dir(\rho^k(Va))$. Now remark that we have $\rho^k(Va)=\tau^j(Va)$ for a certain $j$ as words in $\{1,\ldots,r\}$, and we can conclude by shift-invariance.\end{proof}

Neither condition in the definition of a local procedure (Shift invariance and local decision) can be removed as a hypothesis:\smallskip

$\bullet$ The procedures $\Spots^{evenodd}$ and $\Spots^{Nap,k}$ (for $k>0$) are locally decided, but not shift invariant. $\Spots^{evenodd}$ has only $2$ parking functions of length $2$, while $\Spots^{Nap,k}$  has $4$, so the result of the theorem does not hold for either of the procedures.\smallskip

$\bullet$
Now consider the following memoryless procedure \highlight{$\Spots^{far}$}: if one's preferred spot $a$ is occupied, let $R$, \emph{resp.} $L$, be the number of cars already parked to the right, \emph{resp.} to the left,  of $a$. Then one parks in the nearest spot available to the right if $R\leq L$, and to the left if $R> L$.
In our notations, let $a,W$ such that  $a\in S\coloneqq \Spots(W)$, then $R=\#\{i\in S~|~i>a\}$ and $L=\#\{i\in S~|~i<a\}$. Then define $\dir_{\Spots}=left$ if $L\geq R$ while $\dir_{\Spots}=right$ if $L<R$.

 It is shift invariant, but not locally decided. Direct enumeration shows that there are only $14$ parking functions of length $3$ for $\Spots^{far}$, so the conclusion of Theorem~\ref{theorem:universal} does not hold\footnote{Proposition~\ref{prop:cyclic_lemma} fails for $r=3$, as two cyclic orbits have no parking function ($\{131,242,313,424\}$ and $\{133,244,311,422\}$) while the other fourteen have one.}

\subsection{Extending Theorem~\ref{theorem:universal} and Proposition~\ref{prop:cyclic_lemma}}
\label{sub:weakening_hypotheses}
 Following up on the previous exmaple, the problem is essentially that the parking decisions in $\Spots^{far}$ depend on information on the position (left or right) of cars that are outside of the block where one wants to park. Such information is not stable under a cyclic rotation of the blocks, which is why the cyclic version $\Spots^{far}_r$ used in the proof of Proposition~\ref{prop:cyclic_lemma} behaves badly.

For a bilateral, shift-invariant procedure, the local decision property removes this issue. As it is also easy to state and has some nice structural properties (Propositions~\ref{prop:local_general_to_parking} and~\ref{proposition:localmemoryless}), we chose to state the theorem at this level of generality. But there are weaker properties that also work, the key being to be able to define the cyclic version. Analyzing the proof of Proposition~\ref{prop:cyclic_lemma}, one can weaken the hypothesis as follows.

\begin{proposition}
\label{prop:cyclic_lemma_extended}
Let $\Spots$ be a bilateral procedure that satisfies the following property: 
For any $r\geq 1$, any word $W$ of length at most $r$ with letters in $\{1,\ldots,r\}$, and any cyclic rotation $R=\rho_r^k$ such that $\Spots(R(W))$ does not contain $r+1$, then $\dir(Wa)=\dir(R(Wa))$ for any $a\in\Spots(W)$.\\
Then there is exactly one $\Spots$-parking function in each cyclic orbit in $\bZ^{r}_{[r+1]}$, and thus there are $(r+1)^{r-1}$ $\Spots$-parking words of length $r$.
\end{proposition}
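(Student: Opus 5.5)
We follow the proof of Proposition~\ref{prop:cyclic_lemma}, replacing the use of locality by the stated hypothesis. First we define the cyclic procedure $\Spots_r$ on $\bZ^{\leq r}_{[r+1]}$ by induction on the length, exactly as before. Take $W$ with $S=\Spots_r(W)$ and $a\in S$. Let $I$ be the cyclic block of $S$ containing $a$, and choose $k$ with $\rho^k(I)=\{1,\ldots,j\}$. Then set $\dir_r(Wa)\coloneqq\dir(\rho^k(Wa))$.

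A preliminary point is that this definition is well posed. The word $\rho^k(Wa)$ has letters in $\{1,\ldots,r+1\}$, and we want it to be read by $\Spots$ in a way compatible with the cyclic structure. We therefore normalize $k$ so that $r+1\notin\rho^k(S)$; this is possible since $\#S\leq r-1<r+1$ when a further car is added. Under this normalization, we check by induction on prefixes that $\Spots(\rho^k(W))=\rho^k(\Spots_r(W))$ whenever $r+1\notin\rho^k(\Spots_r(W))$. If several $k$ avoid $r+1$, the hypothesis applied with $R=\rho^{k'-k}$ shows that all admissible choices give the same direction. This is the role of the hypothesis: it replaces the step ``locality plus shift invariance give $\dir(\rho^k(Va))=\dir(Va)$''.

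The claims are the same as before. Claim~(1) is $\Spots_r(\rho(W))=\rho(\Spots_r(W))$. It follows because the set of admissible rotations for $\rho(W)$ is the set for $W$ shifted by $-1$, so both directions equal $\dir(\rho^{k}(Wa))$ for a common admissible exponent. Claim~(2) is that $W$ is $\Spots$-parking if and only if $\Spots_r(W)=\{1,\ldots,r\}$.

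For the forward direction of Claim~(2), suppose $W'$ is parking. Then every prefix $W$ has $\Spots(W)\subset\{1,\ldots,r\}$, so $k=0$ is admissible. By the induction above, $\Spots$ and $\Spots_r$ agree on all prefixes.

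For the converse, suppose $\Spots_r(W')=\{1,\ldots,r\}$. Then $r+1$ was never occupied, since sets only grow. So $k=0$ is admissible at every step, and by the hypothesis (with $R=\rho^{k}$ relating to any other choice) the direction chosen by $\Spots_r$ equals $\dir(Wa)$. Hence $\Spots(W')=\{1,\ldots,r\}$.

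Claim~(1) makes $\rho$ permute the final sets $\Spots_r(W)$, which are complements of single points. Within an orbit of size $r+1$ exactly one word therefore misses $r+1$. With Claim~(2) this gives one parking word per orbit. The count $(r+1)^r/(r+1)=(r+1)^{r-1}$ follows as in the proof of Theorem~\ref{theorem:universal}.

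The main obstacle is the well-definedness step: matching the cyclic block rotated to $\{1,\ldots,j\}$ with the hypothesis's condition that $\Spots(R(W))$ avoids $r+1$. We would first try choosing $k$ so that the empty spot(s) of $S$, rather than the block of $a$, are rotated to $r+1$. We would then verify that this still determines a direction that is invariant under $\rho$, using the hypothesis to compare the different admissible rotations.
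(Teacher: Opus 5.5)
Your proposal is correct and follows the argument the paper has in mind, which it only sketches as ``analyzing the proof of Proposition~\ref{prop:cyclic_lemma}'': you use the same cyclic procedure $\Spots_r$ and the same two claims, and you use the hypothesis, through the compatibility $\Spots(\rho^k(W))=\rho^k(\Spots_r(W))$ whenever $r+1\notin\rho^k(\Spots_r(W))$, exactly where locality and shift invariance were used before. The obstacle you flag at the end does not arise: the normalization $\rho^k(I)=\{1,\ldots,j\}$ already sends the empty cyclic predecessor of $I$ to $r+1$, so that choice of $k$ is automatically admissible, and Claim~(1) then holds without using the hypothesis.
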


In particular, note that the property is satisfied if the parking rule is a function of the total number of parked cars --equivalently, if the parking rule for the $i$th car depends on $i$. For this class of procedures, the previous result is in fact hinted at in the seminal article \cite{konheim1966occupancy}, as we will precise in  the next section.

%%%%%%%%%%%%%%%%%%%%%%%%%%%%%
\section{Probabilistic parking}
\label{sec:probabilistic}
%%%%%%%%%%%%%%%%%%%%%%%%%%%%%

To add randomness to the setting, one way is to study properties of $\Spots$-parking words picked uniformly at random for instance. In the classical case, this is a very natural problem from the hashing viewpoint, and the problem  and was studied extensively: see~\cite{diaconis2017probabilizing,flajolet1998hashing}. One can also randomize the procedures themselves, which is the point of view we develop here. 

\subsection{Setting} A procedure  which we will denote \highlight{$\Spots^{KW,q}$} is defined at the end of the seminal article~\cite{konheim1966occupancy}: Fix $q\in [0,1]$. When one's spot is occupied, park at the nearest spot to the right ( \emph{resp.} left) with probability $q$ (\emph{resp.} $1-q$). A more generalized procedure \highlight{$\Spots^{KW,(q)_i}$} is in fact defined at the very end of the paper: if the $i$th driver's spot is occupied then the probabilities are $q_i$ and $1-q_i$ for a fixed family $\mathbf{q}=(q_i)_{i\geq 1}$. Notice that if each $q_i$ is in $\{0,1\}$, we get a class of deterministic procedures that satisfies the hypothesis of Proposition~\ref{prop:cyclic_lemma_extended}.\smallskip

We can develop the notion of a bilateral probabilistic parking procedure $\Spots$ in much the same way as we did in the deterministic case. In that case $\Spots(W)$ will the data of a finitely supported ``probability measure'' on $\fin(\bZ)$; we write it $\Spots_{W}:\fin(\bZ)\to[0,1]$. It is zero outside of a finite number of subsets, and its values sum to $1$.

To define rigorously a bilateral probabilistic procedure $\Spots$, one can take a combinatorial approach.  Fix \highlight{$\dir_{\Spots}(W,S,a)$} real numbers in $[0,1]$, representing the probability to go right when $a\in S$  after having read $\Spots_{W}$. These are defined when $W$ has $\#S$ letters, all of which belonging to $S$. Let us define the (directed, acyclic) graph $G(\Spots)$ with vertices all such pairs $(W,S)$, with outgoing edges at each vertex labeled by $\bZ$. Then we have the following edges for any $W,S,a$:
\begin{itemize}
\item if $a\notin S$, $(W,S)\stackrel{a}{\to} (Wa,S\sqcup\{a\}$ with weight $1$. 
\item if $a\in S$, let $I=\{s,\ldots,t\}$ the block that contains $a$. Then we have an edge $(W,S)\stackrel{a}{\to} (Wa,S\cup\{t+1\}$ with weight $\dir_{\Spots}(W,S,a)$ and an edge $(W,S)\stackrel{a}{\to} (Wa,S\cup\{s-1\}$ with weight $1-\dir_{\Spots}(W,S,a)$.
\end{itemize} 

The value $\Spots_{W}(S)$ is then the total weight of all paths in $G(\Spots)$ going from $(\epsilon,\emptyset)$ to $(W,S)$. It is clear by induction that the sum of $\Spots_{W}(S)$ over $S\in \fin(\bZ)$ is $1$, which justifies thinking of $\Spots_{W}$ as a probability measure.

\begin{definition}[Parking probability]  Let $\Spots$ be a probabilistic parking procedure, and $W$ a word of length $r$. Then the \highlight{parking probability $\mathrm{Park}_{\Spots}(W)$} is defined as $\Spots_{W}(\{1,\ldots,r\})$. 
\end{definition}

The notion of a \textit{memoryless} procedure extends directly: here we need just fix 
real numbers $\dir_{\Spots}(S,a)$ in $[0,1]$. The graph $G(\Spots)$ has correspondingly vertices indexed simply by $\fin(S)$ and its edges are simply defined by dropping $W$ from the definitions above. Finally $\Spots_{W}(S)$ is the total weight of all paths in $G(\Spots)$ from $\emptyset$ to $S$ and labeled by the word $W$.

\subsection{Enumerative properties}
\label{sub:proba_enum}

 The notion of local procedure extends readily to the probabilistic setting.
 
 We have the following extension of Theorem~\ref{theorem:universal}.

\begin{theorem}
Let $\Spots$ be a local, probabilistic parking procedure, and $r\geq 1$. Then the sum of $P_\Spots(W)$ over all words $W$ of length $r$ is $(r+1)^{r-1}$.
\end{theorem}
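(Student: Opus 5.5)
The plan is to adapt Pollak's cyclic argument from Proposition~\ref{prop:cyclic_lemma} to the weighted setting. The claim becomes: for each cyclic orbit $\{W,\rho(W),\ldots,\rho^r(W)\}$ in $\bZ^{r}_{[r+1]}$, the parking probabilities $\Park_\Spots(\rho^k(W))$, $k=0,\ldots,r$, sum to $1$. Since every word with positive parking probability has letters in $\{1,\ldots,r+1\}$ (indeed in $\{1,\ldots,r\}$, as every car parks within distance of its block and the final set is $\{1,\ldots,r\}$), summing over the $(r+1)^{r-1}$ orbits gives the total $(r+1)^{r-1}$.

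\textbf{Step 1: the cyclic procedure.} Build a probabilistic cyclic procedure $\Spots_r$ on $\bZ^{\leq r}_{[r+1]}$ exactly as in the proof of Proposition~\ref{prop:cyclic_lemma}. Given a state $(W,S)$ with $S\subsetneq\bZ/(r+1)\bZ$ and $a\in S$, let $I$ be the cyclic block containing $a$ and choose $k$ with $\rho^k(I)=\{1,\ldots,j\}$. Set the probability of going right to $\dir_\Spots(\rho^k(W_{|I}),\rho^k(I),\rho^k(a))$. This is well defined because the block and the subword are intrinsic. The transitions are then rotation-equivariant, so for every $k$ and $T$ one has $(\Spots_r)_{\rho^k(W)}(\rho^k(T))=(\Spots_r)_W(T)$. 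This is the analogue of Claim~\eqref{claim1}; it holds at the level of path weights in $G(\Spots_r)$, since rotation maps paths bijectively to paths with equal weights.

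\textbf{Step 2: comparison with $\Spots$.} Show that $(\Spots_r)_W(\{1,\ldots,r\})=\Park_\Spots(W)$ for $W\in\bZ^r_{[r+1]}$. A path in $G(\Spots)$ ending at $\{1,\ldots,r\}$ never uses spots $0$ or $r+1$, so all its intermediate sets lie in $\{1,\ldots,r\}$. The same holds for paths of $\Spots_r$ ending there, because $r+1$ is missing at the end if and only if it was missing at each step. Along such paths, blocks in $\bZ$ and cyclic blocks coincide, and by locality plus shift invariance each edge weight equals the corresponding $\Spots_r$ weight. This is the same computation as in Claim~\eqref{claim2}, now applied edge by edge. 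The two path sets are therefore in weight-preserving bijection.

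\textbf{Step 3: conclusion.} For a fixed word $W$ of length $r$, the final measure of $\Spots_r$ is supported on $r$-subsets of $\bZ/(r+1)\bZ$, i.e.\ on the sets $\rho^{k}(\{1,\ldots,r\})$, and it has total mass $1$. By Step 1,
\[\sum_k \Park_\Spots(\rho^{-k}W)=\sum_k (\Spots_r)_W(\rho^k\{1,\ldots,r\})=1.\]
The main obstacle is the bookkeeping in Steps 1 and 2. One must make precise that ``local'' for probabilistic procedures means that the right-probability depends only on $(W_{|I},I,a)$ up to translation. One must also check that rotation is compatible with this data even when the cyclic block wraps around $r+1\equiv 0$. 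The choice of $k$ normalizing $I$ to $\{1,\ldots,j\}$ handles this, exactly as in the deterministic proof.
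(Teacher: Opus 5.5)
Your proposal is correct and is essentially the paper's approach: the paper omits the proof as a direct adaptation of Pollak's cyclic argument from Proposition~\ref{prop:cyclic_lemma}, and your Steps 1--3 carry that adaptation out. These are the rotation-equivariant cyclic procedure, the weight-preserving identification of paths ending at $\{1,\ldots,r\}$, and the total mass $1$ of the final cyclic measure spread over the $r+1$ sets $\rho^k(\{1,\ldots,r\})$. Defining the cyclic rule through the rotated subword $W_{|I}$ rather than the full rotated word is a harmless and slightly cleaner variant. It is well defined because each car's preferred spot and its final spot lie in the same final block, so $W_{|I}$ is determined by $(W,S)$.
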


This was noticed to hold for the procedure $\Spots^{KW,q}$ in \cite{konheim1966occupancy}. The key to the theorem is the following extension of Proposition~\ref{prop:cyclic_lemma}:

\begin{proposition}
Let $\Spots$ be a local, probabilistic parking procedure, and $r\geq 1$. Then the  sum of $P_\Spots(W)$ over any cyclic orbit in $\bZ^{r}_{[r+1]}$ is equal to $1$.  Thus the sum of $P_\Spots(W)$ over all words of length $r$ is $(r+1)^{r-1}$  
\end{proposition}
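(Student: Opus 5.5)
We follow the proof of Proposition~\ref{prop:cyclic_lemma}, with sets replaced by probability measures, i.e. by the weighted graph $G(\Spots)$. The plan is to build a probabilistic cyclic procedure $\Spots_r$ on words in $\bZ^{\leq r}_{[r+1]}$, compare its path weights with those of $\Spots$, and then sum over an orbit.

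\textbf{Construction of $\Spots_r$.} We work with a weighted graph $G_r$ whose vertices are pairs $(W,S)$ with $S\subset\{1,\ldots,r+1\}$. Edges for a letter $a\notin S$ have weight $1$. For $a\in S$, let $I$ be the cyclic block of $S$ containing $a$, assumed proper (this holds as long as $\#S\leq r$). Choose a rotation $\rho^k$ with $\rho^k(I)=\{1,\ldots,j\}$, and let $V=W_{|I}$. The edge to the spot right of $I$ gets weight $\dir_\Spots(\rho^k(V),\rho^k(I),\rho^k(a))$, and the edge to the spot left of $I$ gets the complementary weight. By locality (shift invariance together with local decision, now meaning that these probabilities depend only on the subword parked in the block, up to translation), this quantity does not depend on $k$ as long as $\rho^k(I)$ avoids $r+1$. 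Let $\Spots_{r,W}(S)$ be the total weight of paths labeled $W$ from $(\epsilon,\emptyset)$ to $(W,S)$. As before, $\sum_S \Spots_{r,W}(S)=1$.

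\textbf{Claim 1: rotation equivariance,} $\Spots_{r,\rho(W)}(\rho(S))=\Spots_{r,W}(S)$. The map $(W,S)\mapsto(\rho W,\rho S)$ is a weight-preserving isomorphism of $G_r$, since the weights are defined after normalizing the block by a rotation. Summing over paths gives the identity.

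\textbf{Claim 2: agreement with $\Spots$,} $\Spots_{r,W}(\{1,\ldots,r\})=\Park_\Spots(W)$ for $W$ of length $r$ with letters in $\{1,\ldots,r\}$. Consider the paths in $G(\Spots)$ ending at $\{1,\ldots,r\}$. Every intermediate set on such a path lies in $\{1,\ldots,r\}$: sets only grow, so a path that ever occupies a spot outside $\{1,\ldots,r\}$ cannot end at $\{1,\ldots,r\}$. Likewise, paths in $G_r$ ending at $\{1,\ldots,r\}$ never occupy $r+1$. Along such paths, blocks are ordinary intervals inside $\{1,\ldots,r\}$, and the choices ``left of $I$'' and ``right of $I$'' name the same spots $t-1$ and $u+1$ in both graphs. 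A cyclic choice that wraps around to $r+1$ corresponds in $\bZ$ to the spot $0$ or $r+1$, and those paths are discarded on both sides. By locality and shift invariance, the weights agree, since $\rho^k(V)$ is a translate $\tau^{j}(V)$ of $V$. Hence the two path sums coincide.

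\textbf{Conclusion.} Every path of length $r$ in $G_r$ ends at a set of size $r$, which is $\{1,\ldots,r+1\}\setminus\{m\}$ for some $m$. By Claim~1, $\Spots_{r,W}(\{1,\ldots,r+1\}\setminus\{m\})=\Spots_{r,\rho^{r+1-m}(W)}(\{1,\ldots,r\})$. Summing over $m$ and using total mass $1$, the sum over the orbit of $\Spots_{r,V}(\{1,\ldots,r\})$ equals $1$. Claim~2 turns this into $\sum_{V\in\text{orbit}}\Park_\Spots(V)=1$. If the orbit has fewer than $r+1$ distinct elements, we count the orbit as a multiset indexed by $\bZ/(r+1)\bZ$. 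Orbits always have size exactly $r+1$ anyway, since $\rho$ acts freely on words, so this causes no trouble.

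Summing over the $(r+1)^{r-1}$ orbits gives the total $(r+1)^{r-1}$. Words containing letters outside $\{1,\ldots,r+1\}$ contribute zero, since such a car parks at its own spot outside $\{1,\ldots,r\}$. Words containing the letter $r+1$ also contribute zero, for the same reason: that car finds spot $r+1$ free and takes it.

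The main obstacle is Claim~2: checking carefully that the discarded wrap-around or out-of-range paths match on both sides, and that the locality hypothesis, properly formalized for $\dir_\Spots(W,S,a)$, gives exactly equal weights.
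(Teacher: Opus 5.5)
Your proposal is correct and is exactly the direct adaptation of the proof of Proposition~\ref{prop:cyclic_lemma} that the paper has in mind; the paper skips this proof. The ingredients match: a cyclic weighted version of $G(\Spots)$ in which each block is rotated to $\{1,\ldots,j\}$, rotation equivariance, and a weight-preserving bijection between the paths ending at $\{1,\ldots,r\}$ in the two graphs. One small point to tidy: the orbit sum applies Claim~2 also to words containing the letter $r+1$, where both $\Spots_{r,W}(\{1,\ldots,r\})$ and $\Park_{\Spots}(W)$ vanish for the reason you give, and $V=W_{|I}$ is a function of the vertex $(W,S)$ because each car ends in the block containing its preferred spot.
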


We skip the proofs as they are direct adaptations of the previous ones.\smallskip

The local hypothesis can be weakened to the one from Proposition~\ref{prop:cyclic_lemma_extended}. In particular, the result applies to the procedures $\Spots^{KW,(q)_i}$, and thus we recover as a special case the result mentioned at the very end of \cite{konheim1966occupancy}.

\subsection{Abelian procedures} 
\label{sub:abelian}

A local, memoryless probabilistic procedure $\Spots$ is completely specified by the numbers $p_\Spots(r,i)=\dir_\Spots(r,i)\in [0,1]$ giving the probability to go right when a car with preferred spot $i$ wants to park on the block $\{1,\ldots,r\}$. 

\begin{example}
For the procedure $\Spots^{KW,q}$, we have $p(r,i)=q$ for all $r,i$.
\end{example}

\begin{example} Let us define the procedure \highlight{$\Spots^{q}$}, with $q\in [0,+\infty]$, by $p_{\Spots^{q}}(r,i)=\frac{[i]}{[r+1]}$ where $[j]$ denotes the $q$-integer $1+j+\cdots +q^{j-1}$: this is the classical probability that the car parks in $r+1$ after a biased random walk on $\{1,\ldots,r\}$ where the probability to go right is $1/(1+q)$. 

This leads to the rich combinatorics of {\em remixed Eulerian numbers}, introduced by the author with Vasu Tewari~ \cite{nt2021qKlyachko,nt2023remixed}. The interpretation as a parking procedure is a direct reformulation of the ``sequential process'' given in \cite[Section 2]{nt2023remixed}.
\end{example}

\begin{definition}
A (probabilistic) parking procedure $\Spots$ is called \emph{abelian} if 
 $\Spots_{a_1\cdots a_r}=\Spots_{a_{\sigma_1}\cdots a_{\sigma_r}}$ for any letters $a_1,\ldots,a_r$ and any permutation $\sigma$.
\end{definition}

This means that the outcome of the procedure is independent of the orders in which the cars arrive. $\Spots^{right}$ is known to be abelian, as can be seen immediately from the characterization of parking words in the introduction. More generally, the procedures $\Spots^{q}$ are known to be abelian, cf. \cite{nt2023remixed}.

We have conversely the following uniqueness result:

\begin{proposition}
\label{prop:memoryless_abelian}
The procedures $\Spots^{q}$, $q\in [0,\infty]$, are the only local memoryless procedures that are abelian.
\end{proposition}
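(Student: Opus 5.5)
The plan is to prove two directions separately, and most of the work is in uniqueness. That the procedures $\Spots^{q}$ are abelian is quoted above from \cite{nt2023remixed}, so it remains to show that abelianness forces $p_\Spots(r,i)=\frac{[i]}{[r+1]}$ for some $q\in[0,\infty]$. By Proposition~\ref{proposition:localmemoryless}, in its probabilistic form, a local memoryless procedure is just the array $p(r,i)\in[0,1]$ for $1\le i\le r$. Abelianness gives equalities $\Spots_{Wab}=\Spots_{Wba}$, and I will only use very short words of this form. I take $W$ to be a permutation of $12\cdots r$, so that $\Spots_W$ is the Dirac mass at $\{1,\ldots,r\}$.

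\textbf{Step 1 (boundary recursions).} Take $a\in\{1,\ldots,r\}$ and $b=r+1$, and compare the orders $W\,a\,(r+1)$ and $W\,(r+1)\,a$.
\begin{itemize}
\item In the first order, car $a$ goes right with probability $p(r,a)$ and lands on $r+1$. The next car, which prefers $r+1$, then sits at the last position of the block $\{1,\ldots,r+1\}$ and goes right with probability $p(r+1,r+1)$. If instead car $a$ goes left, the next car simply parks at $r+1$.
\item In the second order, the block becomes $\{1,\ldots,r+1\}$ and car $a$ goes right with probability $p(r+1,a)$.
\end{itemize}
Equating the probability of the final set $\{1,\ldots,r+2\}$ gives
\[p(r+1,a)=p(r,a)\,p(r+1,r+1).\]
Doing the same with the empty spot $0$, and shifting by shift invariance, gives the mirror identity
\[1-p(r+1,a+1)=\bigl(1-p(r,a)\bigr)\bigl(1-p(r+1,1)\bigr).\]
Iterating the first identity gives
\[p(r,i)=p(i,i)\prod_{k=i+1}^{r}p(k,k),\]
so the whole array is determined by the diagonal values $x_k\coloneqq p(k,k)$.

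\textbf{Step 2 (pinning the diagonal).} Write $x_1=p(1,1)=\frac{1}{1+q}$, which defines $q\in[0,\infty]$. I then prove by induction on $r$ that $x_r=\frac{[r]}{[r+1]}$. The key input is to combine the two recursions at $a=r$: the first gives $p(r+1,r)=x_rx_{r+1}$, while the second with $a=r-1$ expresses $1-p(r+1,r)$ through $p(r,r-1)$ and $p(r+1,1)$. The value $p(r+1,1)$ is itself $x_1x_2\cdots x_{r+1}$ by Step~1. This yields one equation in the single unknown $x_{r+1}$, and I expect it to be linear and to reproduce the $q$-integer identity $[r+2]=1+q[r+1]$. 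If it is not enough, I will also equate the outcomes of $W\,a\,b$ and $W\,b\,a$ for $a,b$ both inside the block. This gives
\[p(r,a)\,p(r+1,b)=p(r,b)\,p(r+1,a)\]
together with its left analogue, which say that $p(r+1,\cdot)$ and $p(r,\cdot)$ are proportional, as they are in the $q$-family.

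\textbf{Main obstacle.} The difficulty is the degenerate cases, where some $x_k\in\{0,1\}$. There the equations cannot be divided through, and for a small block it is unclear which $q$ the array should correspond to. I will treat $x_1\in\{0,1\}$ separately, showing by the same recursions that the array is then forced to be constant ($0$ or $1$ throughout). These are the endpoint procedures, read as the limits $q\to\infty$ and $q\to0$ of $\frac{[i]}{[r+1]}$, with the left/right convention fixed to match $\Spots^{right}$. I also need to check that the sign convention in the definition of $\Spots^q$ agrees with $p(1,1)=\frac{1}{1+q}$, and adjust $q\leftrightarrow q^{-1}$ if necessary.
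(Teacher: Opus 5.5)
Your argument is correct in substance. It shares with the paper the first recursion $p(r+1,a)=p(r,a)\,p(r+1,r+1)$, and hence the reduction to the diagonal; the difference is the second relation.

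The paper does not use your mirror identity. It compares $12\cdots(r-1)rr$ with $12\cdots(r-2)rr(r-1)$. In the second order the repeated car $r$ meets the singleton block $\{r\}$, so $p(1,1)=\frac{1}{1+q}$ enters directly and one gets $p(r,r)=\frac{1}{1+q}+\frac{q}{1+q}\,p(r,r-1)$. Together with $p(r,r-1)=\frac{[r-1]}{[r]}p(r,r)$, this is solved at once via $[r+1]=[r]+q^r$, with no division issue. Your route only uses words that extend a full block at one of its two ends, so it is more symmetric. The price is that your equation involves the whole product $x_1\cdots x_{r+1}$ and needs a less obvious $q$-identity.

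As written, your Step 2 has two loose ends, both fixable with your own Step 1.
\begin{itemize}
\item The combination with $a=r-1$ only exists for $r\ge 2$, so $x_2$ is never reached.
\item It collapses to $0=0$ when $x_1=0$.
\end{itemize}
Take instead $a=r$ in the mirror identity: $1-x_{r+1}=(1-x_r)(1-x_1\cdots x_{r+1})$, that is
\[x_{r+1}\bigl(1-(1-x_r)\,x_1\cdots x_r\bigr)=x_r .\]
Since $(1-x_r)\,x_1\cdots x_r\le x_r(1-x_r)\le\frac14$ for any array with values in $[0,1]$, the coefficient is at least $\frac34$. So $x_{r+1}$ is always uniquely determined by $x_1,\ldots,x_r$, for every $r\ge 1$ and including $x_1\in\{0,1\}$; no separate degenerate-case analysis is needed.

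For $q<\infty$, the induction hypothesis gives $x_1\cdots x_r=1/[r+1]$ and $1-x_r=q^r/[r+1]$. The identity $[r+1]^2-[r][r+2]=q^r$ then yields $x_{r+1}=[r+1]/[r+2]$. For $q=\infty$, the all-zero array visibly solves the equation. Alternatively, uniqueness together with the quoted abelianity of the $q$-procedures already forces this without any computation.

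Finally, your fallback identity $p(r,a)p(r+1,b)=p(r,b)p(r+1,a)$ holds automatically for the product form $p(r,i)=x_i\cdots x_r$, so it gives no new information. Only its left analogue does.
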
 
 
In the deterministic case, it follows that a local memoryless procedure $\Spots$ is abelian if and only if it is either $\Spots^{right}$ or its opposite version $\Spots^{left}$.

\begin{proof}
Let $\Spots$ be a local memoryless abelian procedure, and write $p_{r,i}\coloneqq p_\Spots(r,i)$ for $1\leq i\leq r$. Define $q$ by $p_{1,1}=\frac{1}{1+q}$. We need to show that $p_{r,i}=\frac{[i]}{[r+1]}$ for all $r,i$. We prove this by induction on $r$. It holds for $r=1$ by definition, and we assume the claim holds for $r-1$ with $r\geq 2$.

Consider the word $W_{r,i}=12\cdots (r-1)ri$ for any $1\leq i\leq r$. By definition its parking probability is $p_{r,i}$. Now, for $i\neq r$, consider the word $12\ldots (r-1)ir$, obtained by exchanging the last two letters in $W_{r,i}$. Its parking probability is given by $p_{r-1,i}p_{r,r}$, so by abelianity and the induction hypothesis we obtain:
\begin{equation}
\label{eq:firsteq}
p_{r,i}=\frac{[i]}{[r]}p_{r,r}\text{ for }i=1,\ldots,r-1.
\end{equation}
Consider now the word $12\cdots (r-2)rr(r-1)$, obtained from $W_{r,r}$ by moving $n-1$ to the end. Its parking probability is easily computed as $\frac{1}{1+q}+\frac{q}{1+q}p_{r,r-1}$, the two terms corresponding to the penultimate car going right or left. By abelianity.
\begin{equation}
\label{eq:secondeq}
p_{r,r}=\frac{1}{1+q}+\frac{q}{1+q}p_{r,r-1}.
\end{equation}

One can now easily solve the system of equations~\eqref{eq:firsteq},\eqref{eq:secondeq} and get the desired result $p_{r,i}=\frac{[i]}{[r+1]}=p_{\Spots^{q}}(r,i)$. 
\end{proof}

%%%%%%%%%%%%%%%%%%%%%%%%%%%%%
\section{Encoding with binary trees}
\label{sec:binary_trees}
%%%%%%%%%%%%%%%%%%%%%%%%%%%%%

In this section we will define a general {lift} of the parking procedure: given any parking procedure $\Spots$, we define an injective function $\Tspots$ also defined on $\bZ^*$ with a natural projection $\Pi$ such that $\Pi\circ \Tspots= \Spots$.

\subsection[Injective lift]{Definition of $\Tspots$}

Recall that a finite, plane, binary \emph{tree} is defined recursively as either empty or consisting of a node, a left (sub)tree and a right (sub)tree. Its size is its number of nodes. These are in bijection with \emph{complete} binary trees, where by attaching extra leaves. A forest is then usually defined as a set of trees. 

\begin{definition}
An \emph{indexed forest} $F$ is the data of a subset $S$ in $\bZ$, and a binary tree of size $|I|$ for each block of $I$ of $S$. The set $S$ is the {\em support} of $F$. Binary trees of size $r$ are identified with indexed forests with support $\{1,\ldots,r\}$.
\end{definition}

 In the example below, the support is $\{2,3,4,5,6,7,11,14,15\}$. Elements of the support correspond bijectively to nodes of the forest, as illustrated by the arrows in the figure: this is the \highlight{canonical labeling} of the nodes that will be important in what follows, which we will use to specify nodes.

 \begin{remark}
     This corresponds to the notion introduced in~\cite{nt2024forest} by V. Tewari and the author. It was then recovered naturally in~\cite{NST_a} in further collaboration with H. Spink. In this last work, indexed forests are equivalently defined as sequences of trees, and one should beware that the notion of support is slightly different. 
 \end{remark}

\begin{figure}[!ht]
\centering
\includegraphics[width=0.8\textwidth]{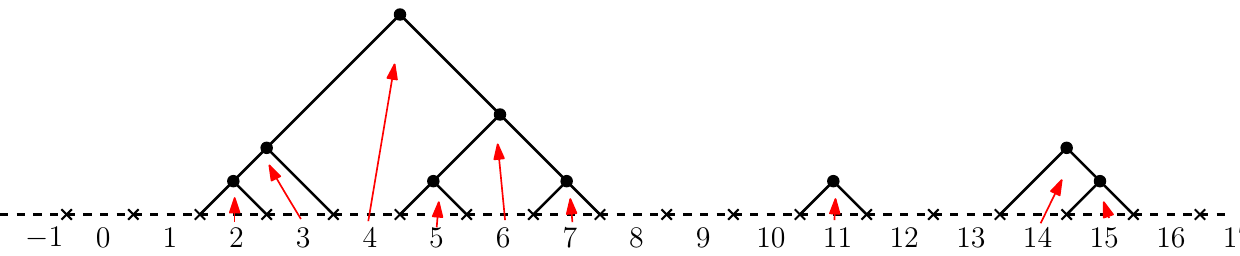}
\end{figure}

Fix any bilateral procedure $\Spots$. Given any word $W$, we will attach to it a pair $(P,Q)$ of labeled forests with the same underlying indexed forest.
The projection \highlight{$\Pi(P,Q)$} is defined as the support of this indexed forest $F$. The labels are attached to the nodes of the common shape, and we will write $p_i$, \emph{resp.} $q_i$ for the label in $P$, \emph{resp.} $Q$ of the node $i\in\Pi(P,Q)$. $P$ will be bijectively labeled  by the multiset of letters of $W$, while $Q$ is a \emph{decreasing} forest: it has labels $\{1,\ldots,r\}$ and each node has greater label than all its descendants.\medskip

To give the precise construction, let us first focus on the forest $Q$. Consider the injection $\pi_\Spots(W)$ defined in Section~\ref{sec:bilateral}, which encodes the order in which parking spots were filled. Let $S=\Spots(W)$. Then one can naturally encode $\pi_\Spots(W)$ as an indexed decreasing forest with support $S$: the standard bijection between permutations and labeled trees naturally extends. This defines the forest $Q$, and its underlying forest $F$. We note $i\in S\to q_i$ this labeling. To construct $P$, simply label $F$ by having node $i$ get the label $p_i=W_{q_i}$.

\begin{definition}
We define $\Tspots: \bZ^*\mapsto (P,Q)$ to be this construction, which we call the $\Spots$-correspondence attached to the parking procedure $\Spots$.
\end{definition}

Here is an illustration for $\Spots^{LBS}$ and the word $W=5.11.8.3.9.3.2$:

\begin{figure}[!ht]
\centering
\includegraphics[width=0.7\textwidth]{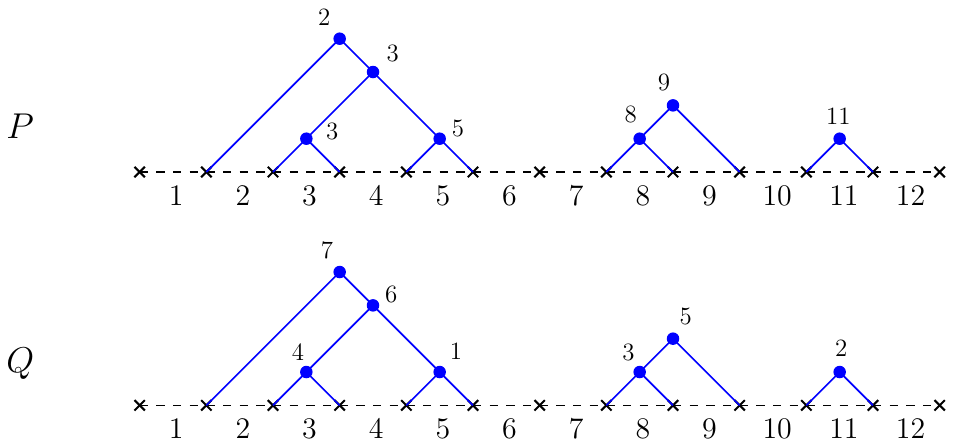}
\end{figure}

 Let us list some immediate properties. 

\begin{proposition}
Let $\Spots$ be a bilateral parking procedure:
\begin{itemize}
\itemsep0pt
\item A word $W$ is $\Spots$-parking if the common shape of $(P,Q)$ is a tree with support of the form $\{1,2,\ldots,r\}$. 
\item The $\Spots$-correspondence is injective.
\item The canonical labeling of a node is equal to the spot where the corresponding car ended up parking.
\end{itemize}
\end{proposition}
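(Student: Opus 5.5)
We prove the three items in reverse order, since the third is what drives the other two. Recall how $Q$ is built: $\pi_\Spots(W)$ is an injection $S\to\{1,\ldots,r\}$ with $S=\Spots(W)$. On each block $I$ of $S$ we form the decreasing binary tree of the restricted word, the standard bijection between permutations and decreasing trees. The root is the position in $I$ carrying the maximal value of $\pi_\Spots(W)$. The left subtree is built recursively from the part of $I$ to the left of the root, and the right subtree from the part to its right. The in-order (symmetric) traversal of the tree on $I$ therefore visits the positions of $I$ from left to right. The canonical labeling of an indexed forest is exactly this in-order labeling by the elements of the support. Hence the node with canonical label $k$ is the node built at position $k\in S$, and its $Q$-label is $q_k=\pi_\Spots(W)(k)$, the index of the car that parked in spot $k$. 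This gives the third item.

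For the first item, suppose $W$ has length $r$. By Definition~\ref{definition:parking_function}, $W$ is $\Spots$-parking if and only if $\Spots(W)=\{1,\ldots,r\}$. Since $\Pi(P,Q)$ is the support $S=\Spots(W)$, the condition says exactly that the support is $\{1,\ldots,r\}$. Such a support is a single block, so the indexed forest is a single tree. Conversely, a tree whose support is $\{1,\ldots,r\}$ means $\Spots(W)=\{1,\ldots,r\}$. The statement says ``if''; we will in fact prove ``if and only if''.

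For injectivity, we recover $W$ from $(P,Q)$. The labels of $Q$ are $\{1,\ldots,r\}$ bijectively, with $r$ the length of $W$. For each $j$ there is a unique node $k$ with $q_k=j$, and by construction $p_k=W_{q_k}=W_j$. Reading the $P$-labels in the order of increasing $Q$-labels therefore returns $W$. In fact only the labels are needed, not the shape, which makes injectivity immediate.

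The only step needing care is the first one: checking that the indexed decreasing forest attached to $\pi_\Spots(W)$ places node $k$ exactly at canonical label $k$. This comes down to noting that the decreasing-tree bijection is compatible with in-order reading, and that decomposing $S$ into blocks matches the decomposition of the indexed forest into trees. Bilaterality is not even needed for these three properties. It matters later only to ensure that each new car lands adjacent to a block, which is what makes the tree structure meaningful. The proof is otherwise routine.
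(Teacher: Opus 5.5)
Your proof is correct and is the direct unwinding of definitions the paper intends; the paper lists these as immediate properties and gives no proof. Your three steps are the natural ones: identify canonical labels with in-order positions under the decreasing-tree bijection, read the support off $\Pi$, and recover $W$ by listing the $P$-labels in increasing order of $Q$-labels. You are also right that bilaterality is never used, although your closing gloss that it makes each new car land next to a block only applies to cars whose preferred spot is already occupied.
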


Following up on this last point, one sees that $\sum_i|p_i-i|$ corresponds to the 
\highlight{total displacement} of the parking process. 

%%%%%%%%%%%%%%%%%%%
\subsection{Correspondence}
\label{sub:correspondence}

The framework is particularly nice if the following property holds: for any $(P,Q)$ in the image of $\Tspots$, all $(P,Q')$ with $Q'$ a decreasing forest of the same shape is also in the image of $\Tspots$. In that case the image of $\Tspots$ is determined once we know what labeled forests $P$ can occur. We will say that $\Tspots$ is a \highlight{good correspondence}.

In general, local procedures do not give rise to good correspondences. It holds however in the memoryless case:

\begin{proposition}
Let $\Spots$ be a locally decided, memoryless procedure. Then $\Tspots$ is a good correspondence. The corresponding forests $P$ are given by the following condition: if $i$ is any node with interval $[l_i,r_i]$, then the possible values of its label come from the following set :
\begin{equation}
\{j~|~\dir_\Spots(\{l_i,\ldots,i-1\},j)=right\}\cup
 \{i\}\cup \{j~|~\dir_\Spots(\{i+1,\ldots,r_i\},j)=left\}.
\end{equation}
\end{proposition}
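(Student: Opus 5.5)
The plan is to prove both assertions together, by characterizing recursively which pairs $(P,Q)$ occur, and to work block by block. Since $\Spots$ is locally decided, and the construction of $(P,Q)$ splits along blocks of $S=\Spots(W)$, it suffices to treat a single tree. So fix a tree of the forest with support the block $\{l,\ldots,u\}$. Its root is the node $i$ with the largest $Q$-label, i.e.\ the car $c=q_i$ that parked last in this block. Just before car $c$ arrived, spot $i$ was empty. The cars occupying $\{l,\ldots,u\}$ then formed two blocks: $\{l,\ldots,i-1\}$ (the left subtree) and $\{i+1,\ldots,u\}$ (the right subtree), either possibly empty.

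The first step is to determine where car $c$ can have wanted to park, given that it landed in $i$. There are three cases.
\begin{enumerate}
\item Its preference is $i$ itself.
\item Its preference $j$ lies in the left block $\{l,\ldots,i-1\}$. Then it was sent right to $i$, since $i$ is the spot just right of that block.
\item Its preference $j$ lies in $\{i+1,\ldots,u\}$. Then it was sent left.
\end{enumerate}
By memorylessness, the decision depends only on the current occupied set. By the local decision property, it depends only on the block containing $j$. Memorylessness is used again here: the direction is a function of the set $\{l,\ldots,i-1\}$ (respectively $\{i+1,\ldots,r_i\}$) and $j$ alone, not of the order in which that block was filled. The label $p_i$ therefore lies in the displayed set, with $[l_i,r_i]$ being the interval of the subtree rooted at $i$. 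Applying the same reasoning recursively to each subtree gives the condition at every node. Here the interval $[l_i,r_i]$ of node $i$ is the block containing $i$ at the moment the car labeled $q_i$ parks.

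The second step is the converse, which also yields goodness. Given a labeling $P$ satisfying the condition at every node and any decreasing labeling $Q'$ of the same shape, define $W$ by $W_{q'_i}=p_i$. We then show by induction on time that the car $q'_i$ parks at $i$. When car $q'_i$ arrives, all its descendants in the tree have labels smaller than $q'_i$, so they have already parked. No ancestor has parked yet, and the cars of other trees lie outside this tree's block, separated from it by an empty spot. Hence the occupied spots within $[l_i,r_i]$ are exactly $[l_i,r_i]\setminus\{i\}$, split into the two subblocks. The condition on $p_i$, together with memorylessness and local decision, then sends the car to $i$. Blocks never merge across distinct trees, since their supports are separated by gaps, so $\Tspots(W)=(P,Q')$.

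The main obstacle is the invariance claim in the converse. The occupied set at the moment car $q'_i$ arrives differs from the one in the original $Q$ outside $[l_i,r_i]$, and even inside the two subblocks the filling order differs. This is exactly where memorylessness is needed, together with locality. Beyond that, one must check carefully that the interval $[l_i,r_i]$ of a node is intrinsic to the shape, namely the set of nodes of its subtree, independent of $Q$.
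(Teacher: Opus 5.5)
Your proof is correct, but it takes a different route from the paper.

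The paper proves goodness by local moves. If $q_i=k$ and $q_j=k+1$ sit at incomparable nodes, exchanging them in $Q$ is realized by exchanging the letters in positions $k,k+1$ of $W$. Local decision handles the two swapped cars, and memorylessness handles all later ones, since the occupied set after time $k+1$ is unchanged. Goodness then follows because any two decreasing labelings of a forest are connected by such exchanges, and the description of $P$ is left to the reader.

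You instead identify the whole image at once: it is the set of pairs $(P,Q')$ with $P$ satisfying the node-wise condition and $Q'$ an arbitrary decreasing labeling of the same shape. Goodness is then immediate, because the condition on $P$ never mentions $Q'$. Your route also proves the characterization of $P$ that the paper omits, and it does not need the connectivity of linear extensions under adjacent swaps. The cost is an explicit induction on time. The paper's route is shorter and shows exactly which hypothesis controls which cars.

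When writing it up, make two facts explicit.
\begin{enumerate}
\item Memorylessness and local decision together give $M_{\Spots}(S,a)=M_{\Spots}(I,a)$, where $I$ is the block of $S$ containing $a$. To see this, take $W$ listing the elements of $S$; then $W_{|I}$ lists $I$ and $\Spots(W_{|I})=I$.
\item In both directions, the spots $l_i-1$ and $r_i+1$ are empty when car $q_i$ arrives. Each is either outside the support or an ancestor of $i$ for the in-order labeling, hence carries a larger label in $Q$ (resp.\ $Q'$).
\end{enumerate}
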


\begin{proof} Suppose $W\mapsto(P,Q)$. Exchange $q_i=k$ and $q_j=k+1$ in $Q$ when $i,j$ are incomparable, let $Q'$ be the result. Then we claim that $W'\mapsto Q'$ where $W'$ is obtained from $W$ by swapping letters at positions $k,k+1$. Indeed this is true for up to position $k$ because it's locally decided, and then the memoryless condition takes care of the rest. 

Since all decreasing forests can be obtained in this way, this shows that $\Tspots$. The rest of the statement is easy and left to the reader.
\end{proof}

 To determine the fibers of the outcome function $\pi_\Spots$, one has to simply count the number of $\Spots$-trees of a given shape. We thus get immediately from the previous proposition:

\begin{proposition}
\label{proposition:outcome}
Let $\Spots$ be a locally decided memoryless procedure, and $\sigma$ a permutation of $\{1,\ldots,r\}$. Let $T$ be the binary tree underlying the decreasing tree of $\sigma$. 

Then the number of $\Spots$-parking functions with outcome $\sigma$ is given by the product over all $i$ of the cardinality of the sets in Proposition~\ref{proposition:outcome}. If $\Spots$ is moreover shift-invariant (thus a local procedure), then we have explicitly
\begin{equation}
\label{eq:fiber}
\prod_{i\in T} (1+R_{i-\ell_i}+L_{r_i-i}),
\end{equation}
where $R_k$ is the number of values $i\in\{1,\ldots,k\}$ such that $\dir_\Spots(r,i)=right$, and $L_k=k-R_k$. 
\end{proposition}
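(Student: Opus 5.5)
The plan is to build on the good-correspondence proposition just above. For a locally decided memoryless $\Spots$, the map $\Tspots$ restricted to $\Spots$-parking words is a bijection onto pairs $(P,Q)$ where $Q$ is any decreasing labeling of a binary tree with support $\{1,\ldots,r\}$, and $P$ is a labeling of the same shape such that each node $i$ gets a label from the set described there. The parking words with outcome $\sigma$ are exactly those whose $Q$ is the decreasing tree of $\sigma$, whose shape is $T$. So these words are in bijection with the admissible labelings $P$ of the fixed shape $T$.

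The first step is to check that these admissible labelings form a product set. By the good-correspondence proposition, the label $p_i$ of node $i$ is constrained only through the interval $[\ell_i,r_i]$ of its subtree. The label is either $i$ itself (the car found spot $i$ free), or a value $j$ lying in the left subtree's block $\{\ell_i,\ldots,i-1\}$ with direction \emph{right}, or a value in the right subtree's block $\{i+1,\ldots,r_i\}$ with direction \emph{left}. This works because, by memorylessness and local decision, when car $q_i$ arrives the block containing $j$ is exactly the support of the relevant subtree: its descendants in $Q$ have parked and nothing adjacent has. The choices at different nodes are independent, so the count is the product over $i\in T$ of the sizes of these sets. Establishing this cleanly, especially the claim that the relevant block at the time car $q_i$ arrives is exactly the left (resp. right) subtree interval, is the main point needing care. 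I would argue it by induction on the decreasing labels: the nodes with smaller $Q$-labels in the subtree are exactly those already parked, and the ancestor $i$ is still empty.

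For the explicit formula in the local case, use shift invariance and Proposition~\ref{proposition:localmemoryless}. Since $\dir_\Spots(\{\ell_i,\ldots,i-1\},j)=\dir_\Spots(i-\ell_i,\,j-\ell_i+1)$, the number of $j$ in the left block with direction right is $R_{i-\ell_i}$. Here $R_k$ should be read as $\#\{m\le k : \dir_\Spots(k,m)=right\}$, with block size $k$. Similarly, the number of $j$ in $\{i+1,\ldots,r_i\}$ with direction left is $L_{r_i-i}=(r_i-i)-R_{r_i-i}$. Empty subtrees contribute $R_0=L_0=0$. Adding the single choice $j=i$ gives the factor $1+R_{i-\ell_i}+L_{r_i-i}$, and multiplying over the nodes yields~\eqref{eq:fiber}.
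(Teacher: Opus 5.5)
Your proposal is correct and follows the paper's own route. You identify the fiber of the outcome map over $\sigma$ with the admissible labelings $P$ of the fixed shape $T$ from the good-correspondence proposition, multiply the sizes of the label sets, and use shift invariance to turn each factor into $1+R_{i-\ell_i}+L_{r_i-i}$. The paper simply says this follows immediately, so you give more detail than it does: you explain why the occupied set at time $q_i$ is determined by $Q$, so the choices at different nodes decouple. You also correctly read $R_k$ as counting $\dir_\Spots(k,m)=right$ rather than the misprinted $\dir_\Spots(r,i)$.
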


The $\Spots^{right}$-forests for the classical procedure $\Spots^{right}$ have labels in $[l_i,i]$ for any node $i$. The specialization of Proposition~\ref{proposition:outcome} then gives the well-known result given in~\cite[Exercise 5.49(d)]{stanley1999ec2}.% and reproven in~\cite{colmenarejo2021countingknaples}

\begin{example} For the $k$-Naples procedure $\Spots^{Nap,k}$ the set in Proposition~\ref{proposition:outcome} 
is given by
\[\{l_i+k-1,l_i+k,\ldots,i-1\}\cup\{i\}\cup\{i+1,\ldots,i+k\}\]
 if $l_i>1$, and by
 \[\{1,\ldots,i-1\}\cup\{i\}\cup\{i+1,\ldots,i+k\}\]
  if $l_i=1$.

We thus recover the main result Theorem 3.12 of \cite{colmenarejo2021countingknaples}. Note that $l_i=1$ means that node $i$ lies on the leftmost branch of the tree $T$. It is then immediate to get the recursive formula for $\#\Park_r(\Spots^{Nap,k})$ given in~\cite{christensen2020naples}.
\end{example}

To finish this section, let us specialize to the classical case. We get a bijection $\Phi=\Tspots^{right}$ between usual parking functions and binary trees with a double labeling.

Parking functions are known to be in bijection with families of labeled binary trees in (at least) two other ways. The first one uses the standard representation of parking functions as labeled Dyck paths, composed with any bijection between Dyck paths and binary trees. This was done recently for example in ~\cite[Section 6]{bergeron2021hopfparking}. Another known bijection is with \emph{Shi trees}, as defined by Gessel, see for instance~\cite{bernardi2018deformations} and references therein. 

These bijections are however not naturally related to our correspondence $\Phi$, in the sense that the number of labelings of a given tree differ for $r=3$: If we write the number of labelings in a nonincreasing fashion for all $5$ trees of these sizes, we get $6,4,3,2,1$ in our correspondence, $6,3,3,3,1$ using any of the other bijections.
 
In fact, using some variation of the standard bijection between binary and plane trees, $\Phi$ can be seen to be equivalent to a bijection between parking functions and Cayley trees due to Knuth, as described for instance by Yan~\cite{yan2015parking}. 

\subsection{The probabilistic case}
Constructions generalize naturally: each path in the graph $G(\Spots)$ naturally gives rise to a pair $(P,Q)$ by using the above construction, carrying naturally the weight of the path. If we group all pairs $(P,Q)$ ending in $(W,S)$, we get an interpretation for $\Spots_W(S)$ as the total weight of such pairs. In the case of $\Spots^{q}$, this gives rise naturally to the interpretation of $A_c(q)$ as counting {\em Postnikov trees}~\cite{nt2023remixed}.
 
%%%%%%%%%%%%%%%%%%%%%%%%%%%%%%
\section{Colored version}
\label{sub:extra}
%%%%%%%%%%%%%%%%%%%%%%%%%%%%%%

Let $C$ be any set, and consider the alphabet $A=\bZ\times C$. The set $C$ represents some extra information: in terms of cars, one might consider its brand, its color, or the age of the driver. Let $\val:A=\bZ\times S \to \bZ$ be the projection to the first factor, which represents the preferred spot. Now a preference list consists of a word $a_1\ldots a_r$ in $A^*$, the $i$th car having preferred spot $\val{a_i}$. The set $C$ can then be used as extra source of information in order to decide where to park.

We have as before (bilateral) parking procedures,
 \[\Spots:A^*\to \fin(\bZ)\]
  defined exactly as in Section~\ref{sec:bilateral}: one simply has to replace $\bZ^*$ by $A^*$ and the conditions $a\in \Spots(W)$  by $\val(a)\in \Spots(W)$. %Note that the notion of memoryless can be extended also but is not useful since the set $S$ then plays no role

If $a=(i,s)$, extend the shift $\tau$ by defining $\tau(a)=(i+1,s)$ and extend to words. Then the shift-invariance and local decision property are immediately extended to procedures on $A^*$, so we have the notion of local procedures in this case. 

One can require $\Spots$ to be only a \emph{partial} function, that is, to be defined on a subset $L\subset A^*$. It is reasonable to require that $L$ be closed under deleting a letter at any position, that is, that {$L$ be closed under taking subwords}. This ensures that the words $W_{|I}$ are well-defined in the definition of a local procedure. Also, $L$ should be closed under (cyclic) shifts: if $W$ is a word of $L$ of length $i<r$, the word $\rho_r(W)$ is also in $L$. We can now state an extension of Proposition~\ref{prop:cyclic_lemma}.

\begin{proposition} Let $L\subseteq A^*$ be closed under subwords and cyclic shifts. Let $\Spots$ be a colored local procedure defined on $L$.  Then any cyclic class
in $L\cap A^{r}_{r+1}$ contains exactly one parking word.
\end{proposition}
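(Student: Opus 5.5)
We adapt the proof of Proposition~\ref{prop:cyclic_lemma} to the colored alphabet, letting the colors ride along passively. The cyclic rotation $\rho_r$ acts on $A=\bZ\times C$ by $\rho_r(i,s)=(\rho_r(i),s)$ and extends letterwise to words, so $L\cap A^{r}_{r+1}$ (words of length $r$ whose values lie in $\{1,\ldots,r+1\}$) is a union of cyclic orbits by the closure hypothesis. Cyclic blocks depend only on the values $\val(a_i)$, so the bilateral cyclic framework carries over unchanged.

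First we define the cyclic procedure $\Spots_r$ on the words of $L\cap A^{\le r}_{r+1}$ by induction on length, through its direction function. Let $W$ have length $i<r$ with $S=\Spots_r(W)$, let $a$ satisfy $\val(a)\in S$, let $I$ be the cyclic block containing $\val(a)$, and set $V=W_{|I}$. Choose $k$ with $\rho^k(I)=\{1,\ldots,j\}$ and put $\dir_r(Wa):=\dir_\Spots(\rho^k(Va))$. Three facts make this well defined. The word $Va$ lies in $L$ by closure under subwords, and $\rho^k(Va)\in L$ by closure under cyclic shifts. Its values lie in $\{1,\ldots,j\}$, and $\Spots(\rho^k(V))=\{1,\ldots,j\}$, since by the induction and local decision the cars of $V$ filled exactly the block $I$, which rotates to a genuine interval. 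Finally, $k$ is unique modulo $r+1$ whenever $I$ is a proper subset of the cycle, which holds because $|S|\le i<r+1$.

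The two claims are then proved as before.

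\textbf{Equivariance.} The identity $\Spots_r(\rho(W))=\rho(\Spots_r(W))$ holds because rotating $W$ rotates each block, replaces $k$ by $k-1$, and leaves $\rho^k(Va)$ unchanged.

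\textbf{Agreement with $\Spots$.} We show that $W\in L$ is $\Spots$-parking if and only if $\Spots_r(W)=\{1,\ldots,r\}$. The spot $r+1$ is absent at the end exactly when it is absent at every step. In that case every cyclic block is an honest interval of $\{1,\ldots,r\}$, so $\rho^k$ acts on $Va$ as the shift $\tau^{-(\min I-1)}$. Local decision gives $\dir(Wa)=\dir(Va)$, and shift invariance gives $\dir(Va)=\dir(\tau^{-(\min I-1)}(Va))$. Hence $\Spots$ and $\Spots_r$ agree on every prefix. For the converse, if $W$ is $\Spots$-parking then every intermediate $\Spots$-configuration avoids $r+1$, since the sets increase to $\{1,\ldots,r\}$. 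So the same agreement holds step by step, by induction on prefixes.

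Combining the two claims: each orbit $\{W,\rho(W),\ldots,\rho^r(W)\}$ has a single missing spot, and it moves around the cycle under rotation. The orbit has size exactly $r+1$, because the missing spot of $\rho^m(W)$ is shifted by $m$. Hence exactly one element of the orbit misses $r+1$, that is, has $\Spots_r=\{1,\ldots,r\}$, and by the second claim this is the unique parking word in the orbit.

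The main obstacle is the bookkeeping that $\Spots$ is only partial. Every auxiliary word we feed to $\Spots$, namely $W_{|I}a$, its rotations, and its shifts, must lie in $L$; subword and cyclic-shift closure supply exactly this. We must also check that $\tau^{-m}(Va)$ coincides with $\rho^k(Va)$ as a word of $A^*$, which holds because all its values lie in $\{1,\ldots,r\}$, so no wraparound occurs.
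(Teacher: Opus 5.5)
Your proposal is correct and follows the paper's approach. The paper gives no separate proof of this statement and only points back to Proposition~\ref{prop:cyclic_lemma}; your argument is that adaptation: the cyclic procedure $\Spots_r$ with its two claims (rotation equivariance, and agreement with $\Spots$ as long as $r+1$ stays empty), with colors carried passively and closure of $L$ under subwords and cyclic shifts keeping every auxiliary word in the domain.

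Defining the direction through $\rho^k(Va)$ instead of the paper's $\rho^k(Wa)$ is a harmless and slightly cleaner variant. One small correction: your well-definedness step $\Spots(\rho^k(V))=\{1,\ldots,j\}$ needs shift invariance as well as local decision, because the rotation index attached to a block changes when blocks merge.
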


We can extend $\Spots^{LBS}$ to this colored setting, to get the procedure introduced and studied in~\cite{nt2024forest}. Pick $C=\bZ_{>0}$, and consider the language $L$ of words in $A=\bZ\times C$ with \emph{distinct letters}.
 Order $A$ lexicographically. Let $W,a,I$ such that $I$ is a block of $\Spots^{LBS}(W)$ and $a\in I$. Let $j\in\{1,\ldots,r\}$ be maximal such that $a_j\in I$ (``last car that parked on the interval''). Then if $a<a_j$ the cars parks left, and if $a>a_j$ park right. Since we only consider words with distinct letters, equality can not occur.

\bibliographystyle{plain}
\bibliography{Parking_biblio}

\end{document}